   \def\MR#1{}
\theoremstyle{plain}
\newtheorem{theorem}{Theorem}[section]
\newtheorem{lemma}[theorem]{Lemma}
\newtheorem{proposition}[theorem]{Proposition}
\newtheorem{corollary}[theorem]{Corollary}
\newtheorem*{namedtheorem}{\theoremname}
\newcommand{\theoremname}{testing}
\newenvironment{named}[1]{\renewcommand{\theoremname}{#1}\begin{namedtheorem}}{\end{namedtheorem}}
\theoremstyle{definition}
\newtheorem{definition}[theorem]{Definition}
\newtheorem{example}[theorem]{Example}
\newtheorem{construction}[theorem]{Construction}
\numberwithin{equation}{section}
\newcommand{\RR}{{\mathbb{R}}}
\newcommand{\from}{\colon} 
\newcommand{\bdy}{\partial}
\renewcommand{\setminus}{{\smallsetminus}}
\newcommand{\cut}{{\backslash\backslash}}
\newcommand{\Ddiagram}{\widetilde{\pi}(\widetilde{L})}
\newcommand{\refthm}[1]{Theorem~\ref{Thm:#1}}
\newcommand{\reflem}[1]{Lemma~\ref{Lem:#1}}
\newcommand{\refsec}[1]{Section~\ref{Sec:#1}}
\newcommand{\reffig}[1]{Figure~\ref{Fig:#1}}
\begin{document}

\title[Alternating links on nonorientable surfaces]{Alternating links on nonorientable surfaces and {K}lein-bottly alternating links}

\author{Jessica S.~Purcell}
\address{School of Mathematics, Monash University, Clayton, VIC 3800, Australia}
\email{jessica.purcell@monash.edu}

\author{Lecheng Su}
\address{School of Mathematics, Monash University, Clayton, VIC 3800, Australia}
\email{lecheng.su@monash.edu}

\subjclass[2020]{57K10, 57K12, 57K32}


\begin{abstract}
It has been known for several decades that classical alternating links in the 3-sphere have nice hyperbolic geometric properties. Recent work generalises such results to give hyperbolic geometry of links with alternating projections onto any surface in very general 3-manifolds. However, the most general results require an orientable projection surface. In this paper, we extend to alternating links on nonorientable projection surfaces. As an application, we study Klein bottly alternating links in prism manifolds, which are a natural generalisation of Adams' toroidally alternating links in lens spaces.
\end{abstract}

\maketitle

\section{Introduction}\label{Sec:Intro}

An alternating link has a diagram for which crossings alternate over and under when following each component of the link. Link complements can be studied using geometric techniques, for example by work of Thurston~\cite{thurston1982three}, and these techniques are particularly effective in the alternating case. In 1984, Menasco~\cite{Menasco:AltLinks} showed that any classical link in the 3-sphere with a prime alternating diagram is either a $(2,q)$-torus knot, or it is hyperbolic. Similar hyperbolicity results have been obtained by Adams~\cite{Adams:Toroidally}, Hayashi~\cite{Hayashi}, Ozawa~\cite{Ozawa:GenAlt}, Howie~\cite{howie2017characterisation}, and Adams and Chen~\cite{AdamsChen} for alternating links on orientable surfaces in other settings. Howie and Purcell have recent results that are quite broad: they study geometric properties of links with alternating projections onto orientable surfaces in very general 3-manifolds~\cite{HowiePurcell}. 

However, much less is known for the geometry of alternating links on nonorientable surfaces. In ~\cite{adams2019hyperbolicity, adams2023augmented}, Adams \emph{et al} study alternating links in thickened, possibly nonorientable surfaces. The ambient 3-manifolds of Howie and  Purcell are more general, but only apply to orientable projection surfaces. In this paper, we extend the results of Howie and Purcell to nonorientable projection surfaces. This reproduces many results of~\cite{adams2019hyperbolicity} and~\cite{adams2023augmented}, but also extends the work beyond links in thickened surfaces to links in more general 3-manifolds. One result is the following:

\begin{theorem}\label{Thm:hyp}
Let $\pi(L)$ be a weakly generalised alternating diagram of a link $L$ on a nonorientable projection surface $S$ in a compact, orientable, irreducible 3-manifold $M$.
Suppose that the representativity satisfies $r(\pi(L),S)>4$, $\pi(L)$ is cellular, and $M\cut S$ is atoroidal and contains no essential annulus $A$ with $\partial A \subset \partial M$. If $S$ is not a projective plane $P^2$, then $M\setminus L$ is hyperbolic. If $S\cong P^2$, then $M\setminus L$ is hyperbolic if and only if $\pi(L)$ is not a string of bigons arranged end to end. 
\end{theorem}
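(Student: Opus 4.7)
The plan is to pass to the orientation double cover of $S$ and invoke the orientable version of the Howie--Purcell theorem referenced in the introduction. Since $S$ is nonorientable and $M$ is orientable, $S$ is one-sided in $M$, and a regular neighbourhood $N(S)$ is a twisted $I$-bundle whose boundary is the orientation double cover $\widetilde{S}$ of $S$. Gluing two copies of $M\setminus\operatorname{int} N(S)$ to the trivial $I$-bundle $\widetilde{S}\times I$ along their common $\widetilde{S}$ boundary produces a double cover $\widetilde{M}\to M$ in which $\widetilde{S}$ embeds as a two-sided orientable surface. Let $\widetilde{L}$ denote the preimage of $L$ in $\widetilde{M}$, with diagram $\widetilde{\pi}(\widetilde{L})$ on $\widetilde{S}$. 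The cover $\widetilde{M}$ is compact, orientable, and irreducible (the latter by the equivariant sphere theorem, using irreducibility of $M$).

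The bulk of the work is then verifying that $\widetilde{\pi}(\widetilde{L})\subset\widetilde{S}\subset\widetilde{M}$ satisfies the hypotheses of the orientable Howie--Purcell theorem. Being weakly generalised alternating and cellular are local properties, preserved by the covering projection. Cutting $\widetilde{M}$ along $\widetilde{S}$ yields exactly two disjoint copies of $M\cut S$, so atoroidality and the absence of an essential annulus with boundary in $\partial M$ transfer immediately. For representativity, an essential simple closed curve $\gamma\subset\widetilde{S}$ projects either injectively to a simple closed curve in $S$ or as a $2$-to-$1$ cover of one; since any disk in $S$ lifts to a disjoint union of disks in $\widetilde{S}$, the projection $\pi(\gamma)$ must itself be essential in $S$. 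A short case analysis then gives $|\gamma\cap\widetilde{\pi}(\widetilde{L})|\ge|\pi(\gamma)\cap\pi(L)|>4$, establishing $r(\widetilde{\pi}(\widetilde{L}),\widetilde{S})>4$. Applying the orientable Howie--Purcell theorem then gives that $\widetilde{M}\setminus\widetilde{L}$ is hyperbolic.

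Hyperbolicity descends from this regular double cover: the deck involution acts by isometries on the finite-volume hyperbolic manifold $\widetilde{M}\setminus\widetilde{L}$ (by Mostow rigidity) and freely (the cover is regular of manifolds), so the quotient $M\setminus L$ inherits a complete finite-volume hyperbolic structure. This finishes the case $S\not\cong P^2$. When $S\cong P^2$, the orientation double cover is $\widetilde{S}\cong S^2$, where representativity is vacuous and the orientable Howie--Purcell theorem reduces to Menasco's classical result: $\widetilde{M}\setminus\widetilde{L}$ is hyperbolic unless $\widetilde{\pi}(\widetilde{L})$ is a $(2,q)$-torus link diagram, equivalently a cyclic string of bigons on $S^2$. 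A direct combinatorial check shows that a cellular alternating diagram on $P^2$ lifts under the antipodal cover to such a bigon cycle on $S^2$ if and only if it is itself a string of bigons arranged end to end, yielding the stated if-and-only-if.

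The step I expect to be the most delicate is the sharp characterisation of the $P^2$ exception: it is clear that a string of bigons on $P^2$ lifts to a bigon cycle on $S^2$, but the converse --- that no other weakly generalised alternating, cellular diagram on $P^2$ with $r>4$ lifts to a $(2,q)$-torus diagram on $S^2$ --- requires carefully tracking the deck involution on the diagram and confirming that the only quotients of $(2,q)$-torus link diagrams compatible with the hypotheses are the bigon strings themselves.
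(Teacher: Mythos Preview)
Your approach is essentially the paper's: pass to the orientable double cover $\widetilde{S}\subset\widetilde{M}$, verify that $\widetilde{\pi}(\widetilde{L})$ is weakly generalised alternating there (the paper packages this as \refthm{WGA}, resting on \refprop{doublecover} and \reflem{DoubleWeaklyPrime}), apply Howie--Purcell for $\chi(\widetilde{S})\le 0$ and Menasco for $\widetilde{S}\cong S^2$, then descend. One simplification you missed: the paper \emph{defines} $r(\pi(L),S)$ as the minimum intersection of $\widetilde{\pi}(\widetilde{L})$ with boundaries of compression discs for $\widetilde{S}$ in $M\cut S$, and since $\widetilde{M}\cut\widetilde{S}$ is two disjoint copies of $M\cut S$, representativity transfers tautologically---your curve-projection argument is unnecessary (and slightly off as written, since representativity concerns compression-disc boundaries rather than arbitrary essential curves on $\widetilde{S}$).
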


Careful definitions of terminology can be found in \Cref{Sec:Def}, but briefly, the conditions are mild, automatically satisfied if $\pi(L)$ is cellular in $M$ a twisted $I$-bundle as in~\cite{adams2019hyperbolicity}. Our method of proof for Theorem~\ref{Thm:hyp} is to show $M\setminus L$ is doubly covered by a weakly generalised alternating link on an orientable surface, in the sense of Howie and Purcell~\cite{HowiePurcell}. This leads to other immediate geometric consequences, for example on volume and checkerboard surfaces; see \Cref{Sec:Def}.

Some work has been done in classifying knot diagrams on nonorientable surfaces. Drobotukhina~\cite{Drobotukhina:KnotsProjPlane} tabulated knots projecting onto the projective plane with up to six crossings. Matveev and Nabeeva~\cite{MatveevNabeeva} have tabulated knots projecting on the Klein bottle with up to three crossings. Bourgoin introduced a family called twisted links~\cite{Bourgoin}, which lie in thickened, possibly nonorientable surfaces, generalising the virtual links introduced by Kauffman~\cite{Kauffman:Virtual}.

In a special case we take $S$ to be the Klein bottle and $M$ to be the class of 3-manifolds with finite fundamental group that contain an embedded Klein bottle. These are prism manifolds, investigated by Rubinstein~\cite{rubinstein19793}; see also Scott~\cite{scott1983geometries}. We define a class of \emph{Klein-bottly alternating links}, discussed in \Cref{Sec:Klein}, generalising Adams' toroidally alternating links~\cite{Adams:Toroidally}. We prove a meridian lemma, \Cref{Prop:MerLem}, analogous to the meridian lemma in the toroidally alternating case. This allows us to extend \Cref{Thm:hyp} to more Klein-bottly alternating links, with no requirements on representativity, as follows.

\begin{named}{\Cref{Thm:Kleinbottly}}
\Copy{Statement:Kleinbottly}{
Let $L$ be a geometrically prime, Klein-bottly alternating knot in the prism manifold $M(p,q)$. Then $M(p,q)\setminus L$ is hyperbolic.}
\end{named}

Our methods generalise those of Adams~\cite{Adams:Toroidally} via an extension of tools of Howie and Purcell to nonorientable surfaces~\cite{HowiePurcell}; see also Purcell and Tsvietkova~\cite{PurcellTsvietkova:Standard}. The idea is to decompose the link complement into simpler pieces. In the classical setting, this is done by cutting along checkerboard surfaces, as pioneered by Thurston for a few links~\cite{Thurston:Notes}, and studied carefully by Menasco~\cite{menasco1983polyhedral}. We cut along nonorientable projection surfaces, and use normal surface theory to study geometry.

\subsection{Acknowledgments}
We thank Colin Adams, David Futer, Josh Howie and Stephan Tillmann for helpful discussions. Purcell was supported in part by the Australian Research Council, grant DP240102350.

\section{Links on nonorientable surfaces}\label{Sec:Def}
Throughout, consider piecewise linear maps, so knots and surfaces are tame. 

\subsection{Nonorientable surfaces}

We recall a few facts on nonorientable surfaces and their thickenings in orientable 3-manifolds, partly to define notation. For further information, see for example Martelli~\cite[Section~11.4]{Martelli}.

Let $S$ be a closed, connected nonorientable surface.
Let $\widetilde{S}$ denote the \emph{orientable double cover} of $S$. It is an orientable surface with 2-fold covering map $p\from \widetilde{S}\to S$.
Let $\tau\from \widetilde{S} \to \widetilde{S}$ denote the deck transformation. Thus $\tau$ is a homeomorphism of $\widetilde{S}$ and $p\circ\tau(x) = p(x)$ for all $x\in\widetilde{S}$.

Let $I=[-1,1]$ be the closed interval, and $\iota\from I\to I$ the automorphism $\iota(t)=-t$. The \emph{twisted $I$-bundle} over $S$ is defined to be $S \widetilde{\times} I= (\widetilde{S} \times I) / (\tau,\iota)$. Let $q\from \widetilde{S} \times I \to S \widetilde{\times} I $ denote the quotient map. This is also a double cover. Denote points in $S\widetilde{\times}I$ by $[(x,t)]$, where $[\cdot]$ denotes equivalence class.

Suppose $S$ is embedded in an orientable 3-manifold $M$. Then a regular neighborhood of $S$ in $M$ is homeomorphic to the twisted $I$-bundle $S \widetilde{\times} I$. See, for example \cite[Proposition~1.1.12]{Martelli}. 
There is an embedding of $S$ into $S\widetilde{\times} I$ at level $t=0$: it takes $x$ in $S$ to $[(x,0)]$. Denote the image of the embedding by $S_0$ in $S\widetilde{\times} I$. 
Similarly for $t\neq 0$ in $[-1,1]$, there is an embedding of $\widetilde{S}$ taking $y$ in $\widetilde{S}$ to $[(y,t)]$. Denote the image of this embedding by $\widetilde{S}_t$; note $\widetilde{S}_t =\widetilde{S}_{-t}$. Finally, observe that the boundary of $S\widetilde\times I$ is $\widetilde{S}_{\pm 1}$, homeomorphic to $\widetilde{S}$.

\begin{definition}\label{Def:GenProjSfce}
A \emph{connected nonorientable projection surface} consists of a closed connected nonorientable surface $S$, a compact orientable irreducible 3-manifold $M$, and a piecewise linear embedding of $S$ into $M$ such that $M\setminus S$ is irreducible. 
\end{definition}

Definition~\ref{Def:GenProjSfce} extends \cite[Definition~2.1]{HowiePurcell}, except we restrict to connected surfaces for ease of exposition. 

A \emph{knot} in $S \widetilde{\times} I$ is a circle embedded in $S \widetilde{\times} I$. A \emph{link} $L$ is a disjoint union of circles embedded in $S \widetilde{\times} I$. We say two links are \emph{equivalent} if they are ambient isotopic. If $L$ is a link in the double cover $S\widetilde{\times} I$, then $\widetilde{L}\coloneq q^{-1}(L)$ is a link in $\widetilde{S}\times I$.

\begin{definition}[Generalised diagram]
A link $L \subset S \widetilde{\times} I \subset M$ can be projected onto $S_0$ by $\pi \from S \widetilde{\times} I \to S_0$, where $\pi([(x,t)])=[(x,0)]$. We assume transversality, so $\pi(L)$ is a 4-valent graph on $S_0$. We say the image $\pi(L)$ is a \emph{generalised diagram}. 
\end{definition}

Let $\widetilde{\pi}\from \widetilde{S}\times I \to \widetilde{S}$ be the projection taking $(x,t)$ to $x$. Then $\widetilde{\pi}(\widetilde{L})$ is a generalised diagram of $\widetilde{L}$ on $\widetilde{S}$, in the sense of~\cite{HowiePurcell}.
Note $\widetilde{\pi}(\widetilde{L})$ is a 4-valent graph on $\widetilde{S}$, by transversality. The double cover $p\from \widetilde{S}\to S$ takes $\Ddiagram$ to $\pi(L)$. Similarly $p^{-1}(\pi(L))=\widetilde{\pi}(\widetilde{L})$. For every crossing $X$ on the diagram $\pi(L)$, there are two crossings on $\widetilde{\pi}(\widetilde{L})$ projecting to $X$. 

\begin{definition}[Weakly prime]\label{Def:WeaklyPrime}
A generalised link diagram $\pi(L)$ on a nonorientable projection surface $S$ is \emph{weakly prime} if whenever $D\subset S$ is a disk with $\bdy D$ intersecting $\pi(L)$ transversely exactly twice, $\pi(L)\cap D$ is a single embedded arc.
\end{definition}

\begin{example}[Knots and links on a Klein bottle]
The simplest closed nonorientable surface is the Klein bottle, which we represent as a square with side-pairings as in \Cref{Fig:KleinLink}, left. That figure shows the diagram of a 3-component link in the twisted $I$-bundle over a Klein bottle.

\begin{figure}
  \includegraphics{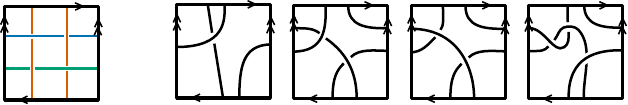}
  \caption{Left to right: A three component link on the Klein bottle, and four knots on the Klein bottle from the Matveev--Nabeeva table~\cite{MatveevNabeeva}; left to right shows $1_1$, $2_6$, $2_5$ and $3_{16}$}
  \label{Fig:KleinLink}
\end{figure}

Diagrams of knots with up to 3 crossings in the twisted $I$-bundle over a Klein bottle were tabulated by Matveev and Nabeeva~\cite{MatveevNabeeva}.  
They restricted to \emph{cellular} diagrams, i.e.\ in which complementary regions are contractible (they called this~\emph{not reducible}), and weakly prime diagrams as in Definition~\ref{Def:WeaklyPrime} (they called~\emph{not composite}). Under these restrictions, they found exactly 17 possible diagrams (graphs) with up to three crossings (4-valent vertices), showed this leads to at most 33 knots, and proved that at least 29 and at most 31 of the knots are distinct.
\Cref{Fig:KleinLink} (right) shows four of their knots. \Cref{Fig:DoubleKleinLink} shows double covers. 
\end{example}

\begin{figure}
  \includegraphics{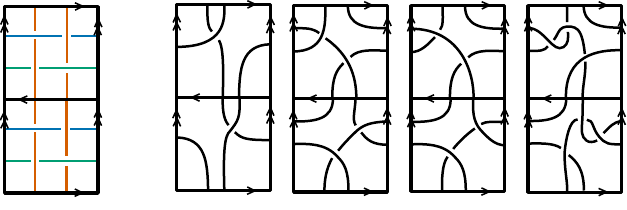}
  \caption{Diagrams $\widetilde{\pi}(\widetilde{L})$ on the orientable double cover of the Klein bottle, for the link and knots of Figure~\ref{Fig:KleinLink}.}
  \label{Fig:DoubleKleinLink}
\end{figure}

\subsection{Alternating knots and links}

In the study of classical alternating knots, or knots on orientable surfaces, there are globally well-defined notions of `over-crossing' and `under-crossing'. That is not the case on nonorientable surfaces, so the definition of alternating links on nonorientable surfaces is not completely trivial. There are a few ways to overcome this. For example, Matveev and Nabeeva view the Klein bottle as a square with sides identified~\cite{MatveevNabeeva}. Since a square is orientable, they define their crossing information on the square. We define alternating via a definition of over--under from the point of view of local discs and oriented curves. In this subsection, we carefully state three definitions of alternating knots on nonorientable surfaces, and show that they are equivalent.

First we define over- and under-crossings locally. To do so, isotope $L$ to lie on $S_0$ except in a ball neighbourhood of each vertex of $\pi(L)$, which corresponds to a crossing. For any such vertex $x$, let $B_x$ denote a small ball neighbourhood of $x$ in $S_0\subset S\widetilde\times I$, so that $\bdy B_x\setminus S_0$ consists of two hemispheres. We may isotope the two arcs of $L$ meeting at $x$ to run through distinct hemispheres of $B_x \setminus S_0$.
Now consider the double cover of this picture. We may take $B_x$ such that the disc $D_x = B_x\cap S_0$ is evenly covered by two discs in $\widetilde{S}$. Then the preimage under $\pi \circ q$ is the disjoint union of two pillars:
\[(\pi \circ q)^{-1}(D_x) \cong (D_x\times I) \sqcup (\tau(D_x) \times \iota(I)).\]
Note $D_x$ and $\tau(D_x)$ have opposite orientations in $\widetilde{S}$, and $\iota$ also switches orientation. 

\begin{definition} \label{Def:Crossing}
Consider the preimage of one of the strands of $L$ running across $x$ under the covering map $q$. If the preimage consists of two arcs of $\widetilde{L}$, one through $D_x\times [0,1)$ and one through $\tau(D_x) \times (-1,0]$, we call that strand an \emph{over-crossing}. If instead the arcs run through $D_x\times(-1,0]$ and $\tau(D_x)\times[0,1)$, it is an \emph{under-crossing}.
\end{definition}

Observe that over- and under-crossings require a choice of $D_x$ versus $\tau(D_x)$. This is a choice of \emph{local orientation}. The choice can be made continuous over a path. 
If $\gamma$ is 1-sided, i.e.\ with regular neighbourhood a M\"obius band, local orientations on $\gamma(0)$ and $\gamma(1)$ will be opposite; traversing $\gamma$ twice returns to the same local orientation. If the curve is 2-sided, $\gamma(0)$ and $\gamma(1)$ have the same local orientation. 

\begin{definition}[Strand alternating]\label{Def:Strand}
Choose a starting point $x$ on $\pi(L)$, and a local orientation on $D_x$. The component of $L$ on which $x$ lies determines a consistent path of orientations for $\pi(L)$, where we take the path to run twice around the component of $\pi(L)$ if the component is 1-sided. Each crossing encountered then inherits a well-defined orientation, hence is an over- or under-crossing, according to Definition~\ref{Def:Crossing}. A generalised diagram $\pi(L)$ is \emph{strand alternating} if, for each component of the link diagram, the crossings encountered always alternate from over-crossing to under-crossing. See \Cref{Fig:AlternatingStrandRegion}, left.
\end{definition}

\begin{figure}
  \includegraphics{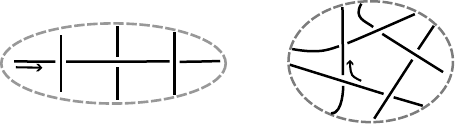}
  \caption{Local pictures of strand and region alternating diagrams}
  \label{Fig:AlternatingStrandRegion}
\end{figure}

\begin{definition}[Region alternating] 
Consider a region $R$ of $S\setminus\pi(L)$. Its boundary forms a curve $\gamma_R\from [0,1]\to S$, and thus defines an orientation.
A generalized diagram $\pi(L)$ is \emph{region alternating} if, for each component $R$ of $S\setminus\pi(L)$, the crossings we encounter when traveling along
$\gamma_R$ always run either from over to under, or under to over. See Figure~\ref{Fig:AlternatingStrandRegion}, right.
\end{definition}

\begin{lemma}\label{Lem:StrandRegion}
  Strand alternating and region alternating coincide. 
\end{lemma}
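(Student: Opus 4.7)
The plan is to reduce both conditions to a single local condition at each edge of the 4-valent graph $\pi(L)$, and then verify that these two local conditions coincide edge by edge.

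Fix an edge $e$ of $\pi(L)$ with endpoints at crossings $v$ and $w$, and propagate a chosen local orientation along $e$ from $v$ to $w$, so the over/under labels of \refdef{Crossing} are defined at both endpoints. Let $\ell_v, \ell_w \in \{O,U\}$ denote the labels, at $v$ and $w$, of the strand $S_e$ containing $e$. The local strand alternating condition at $e$ is exactly $\ell_v \neq \ell_w$, and the global condition of \refdef{Strand} holds if and only if this local condition holds at every edge traversed by each link component.

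Next consider a region $R$ adjacent to $e$. The boundary $\gamma_R$ enters $v$ along some half-edge $h_v$, turns onto $e$, runs across to $w$, then turns onto some half-edge $h_w$ at $w$. Since adjacent half-edges at a crossing lie on different strands, $h_v$ and $h_w$ each lie on the strand opposite to $S_e$ at their respective crossings, and so carry the opposite labels from $\ell_v, \ell_w$. A direct case check then shows the transition type at $v$ along $\gamma_R$ (``over-to-under'' versus ``under-to-over'') is determined by $\ell_v$, the type at $w$ by $\ell_w$, and the two types agree precisely when $\ell_v \neq \ell_w$. Thus the local region alternating condition at $e$ --- that both adjacent regions have matching transition types at $v$ and $w$ --- coincides with the local strand alternating condition at $e$.

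To globalise, note that the region alternating condition, requiring for each $R$ that all transitions along $\gamma_R$ are of a single type, is equivalent to matching transition types at consecutive crossings on $\gamma_R$, which is exactly the local condition applied at each edge of $\gamma_R$. So both global conditions are equivalent to the edgewise local condition, and hence to each other.

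The main obstacle is really just bookkeeping: one must track local orientations carefully along strands or region boundaries whose projected curves are 1-sided on $S$, as already flagged after \refdef{Strand}, where a 1-sided curve must be traversed twice before local orientations close up. However, since the equivalence above is established edgewise and is insensitive to the global topology of the strand or region involved, this causes no genuine complication.
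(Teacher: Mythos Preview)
Your argument is correct and follows essentially the same approach as the paper: both proofs reduce strand and region alternating to a single edgewise local condition (that each arc of $\pi(L)$ runs from over to under or vice versa) and observe that the two global conditions are each equivalent to this local one. Your version simply makes the case analysis at each edge explicit, whereas the paper's proof compresses it into a single sentence.
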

\begin{proof}
  Each arc of the diagram between vertices of $\pi(L)$ runs from over-crossing to under-crossing, or vice versa, if and only if the same is true for the boundary of adjacent regions at that arc. Since both strand and region alternating are defined by how such arcs are assembled, the diagram will be strand alternating if and only if it is region alternating. 
\end{proof}

\begin{definition}[S-alternating]
A generalized diagram $\pi(L)$ is \emph{$S$-alternating} if the diagram $\widetilde{\pi}(\widetilde{L})$ is alternating on $\widetilde{S}$.
\end{definition}

\begin{theorem}\label{Thm:AlternatingDefs}
For a generalised diagram on a nonorientable surface, strand alternating, region alternating, and $S$-alternating are all equivalent.
\end{theorem}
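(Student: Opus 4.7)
By \reflem{StrandRegion}, strand alternating and region alternating already coincide, so my plan is to establish the remaining equivalence: strand alternating if and only if $S$-alternating. The approach is to compare, at each crossing $x$ of $\pi(L)$, the over/under assignment given by \refdef{Crossing} with the standard alternating data at the two crossings of $\widetilde{\pi}(\widetilde{L})$ lying above $x$. The key observation is built into \refdef{Crossing}: a choice of local orientation on $D_x$ is exactly a choice of one of the two sheets $D_x$ or $\tau(D_x)$ of the double cover above $D_x$. With such a choice fixed, a strand through $x$ is an over-crossing in the sense of \refdef{Crossing} if and only if its lift to $D_x \times I$ is an over-crossing in the usual sense in the orientable thickening $\widetilde{S}\times I$.

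The second step is to match the traversals in \refdef{Strand} with traversals of components of $\widetilde{L}$. If a component $c$ of $L$ projects to a 2-sided curve on $S$, then $q^{-1}(c)$ consists of two disjoint circles each mapping homeomorphically onto $c$, interchanged by the deck transformation $\tau$; traversing $c$ once lifts to traversing either preimage component once, and the local orientation returns to itself. If $\pi(c)$ is 1-sided, then $q^{-1}(c)$ is a single circle double covering $c$, and traversing $c$ twice, as prescribed by \refdef{Strand}, corresponds to traversing this lifted component exactly once. In either case, the continuously varying local orientation along the path encodes exactly which sheet of the cover the lift occupies at each moment. It follows that the cyclic sequence of crossings and their over/under labels encountered around a component of $\pi(L)$ is in bijective, label-preserving correspondence with the sequence encountered around the corresponding component of $\widetilde{L}$. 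Hence $\pi(L)$ is strand alternating if and only if every component of $\widetilde{\pi}(\widetilde{L})$ is alternating in the classical sense, and by $\tau$-equivariance this covers both preimage components in the 2-sided case; this is precisely $S$-alternating.

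The main obstacle I anticipate is the 1-sided case: I would need to verify carefully that the reversal of local orientation that one picks up after one trip around a 1-sided component of $\pi(L)$ corresponds exactly to the swap of sheets that the lift undergoes, so that the double traversal required by \refdef{Strand} really does produce the data of a single full loop in $\widetilde{L}$. Once this sheet-tracking identification is made precise, the remaining content reduces to the observation that the alternating property of a cyclic over/under sequence is preserved under any label-preserving bijection.
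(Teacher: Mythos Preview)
Your proposal is correct and follows essentially the same approach as the paper: invoke \reflem{StrandRegion} for the strand/region equivalence, then lift the oriented strand path to $\widetilde{S}\times I$ and observe that the local orientation of \refdef{Strand} is precisely a choice of sheet, so over/under labels at each crossing agree downstairs and upstairs. Your treatment is in fact more explicit than the paper's, which compresses the 1-sided/2-sided dichotomy and the sheet-tracking into a single sentence about the path lifting with consistent orientation.
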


\begin{proof}
The diagram $\pi(L)$ is strand alternating if and only if each component of $L$ alternates running from over- to under-crossing in an associated orientation as in Definition~\ref{Def:Strand}. 
The path with its orientation lifts to a consistently oriented path on $\widetilde{S}\times I$, with the orientation of Definition~\ref{Def:Strand} agreeing with the orientation on $\widetilde{S}$. 
Thus in $\widetilde{S}$, $\widetilde{L}$ alternates running from over- to under-crossings if and only if the same is true for $\pi(L)$. Hence strand alternating is equivalent to $S$-alternating. 

Region alternating follows similarly, or by \reflem{StrandRegion}. 
\end{proof}

We say that a link in a nonorientable surface has an \emph{alternating diagram} if its diagram is either strand alternating, region alternating, or $S$-alternating. The link is \emph{alternating} if it has an alternating diagram.

\begin{example}
Of Matveev and Nabeeva's knots on the Klein bottle with up to three crossings~\cite{MatveevNabeeva}, only the knots $2_6$ and $3_{16}$ are alternating; see Figure~\ref{Fig:KleinLink} (right) and their double covers in \reffig{DoubleKleinLink}.
\end{example}

\begin{lemma}\label{Lem:DoubleWeaklyPrime}
Let $\pi(L)$ be an alternating generalised diagram on a nonorientable projection surface $S$, embedded in a compact, orientable, irreducible 3-manifold $M$. Then $\pi(L)$ is weakly prime if and only if $\Ddiagram$ is weakly prime on $\widetilde{S}$.
\end{lemma}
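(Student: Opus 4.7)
The plan is to prove the two directions using the lift/descent structure of the double cover $p\from\widetilde{S}\to S$. The $(\Leftarrow)$ direction should be immediate: if $D\subset S$ is a disk violating weak primeness of $\pi(L)$, then $D$ is simply connected, so the restriction of $p$ over a neighbourhood of $D$ is a trivial cover, and $p^{-1}(D)$ consists of two disjoint disks in $\widetilde{S}$, each mapping homeomorphically onto $D$. Either lift is then a disk witnessing that $\widetilde{\pi}(\widetilde{L})$ is not weakly prime.

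For the $(\Rightarrow)$ direction, suppose $\widetilde{D}\subset\widetilde{S}$ is a disk with $\partial\widetilde{D}$ meeting $\widetilde{\pi}(\widetilde{L})$ transversely in exactly two points and $\widetilde{D}\cap\widetilde{\pi}(\widetilde{L})$ not a single embedded arc. The key observation is that if $\widetilde{D}$ additionally satisfies $\widetilde{D}\cap\tau(\widetilde{D})=\emptyset$, then $p|_{\widetilde{D}}$ is an embedding and $p(\widetilde{D})\subset S$ is a disk violating weak primeness of $\pi(L)$. So the task reduces to producing such a $\widetilde{D}$.

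The strategy will be to choose a witness $\widetilde{D}$ minimizing the number of crossings of $\widetilde{\pi}(\widetilde{L})$ it encloses, and argue by contradiction that $\widetilde{D}\cap\tau(\widetilde{D})=\emptyset$. If $\partial\widetilde{D}$ and $\tau(\partial\widetilde{D})$ were disjoint, one of $\widetilde{D},\tau(\widetilde{D})$ would contain the other, and since $\tau$ is an involution this forces $\widetilde{D}=\tau(\widetilde{D})$; but then $\tau$ would restrict to a self-homeomorphism of $\widetilde{D}$ with a fixed point by the Brouwer fixed point theorem, contradicting the fact that $\tau$ acts freely. So $\partial\widetilde{D}$ and $\tau(\partial\widetilde{D})$ meet transversely in finitely many points (after a small isotopy supported away from $\widetilde{\pi}(\widetilde{L})$). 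An outermost arc argument then yields an arc $\beta$ of $\tau(\partial\widetilde{D})\cap\widetilde{D}$ cutting off a subdisk $E$ of $\widetilde{D}$ with $E\cap\tau(\widetilde{D})=\beta$; setting $\alpha\coloneq\partial E\cap\partial\widetilde{D}$, the interiors of $E$ and $\tau(E)$ are then disjoint. A case analysis on how the two diagram intersection points on $\partial\widetilde{D}$ distribute between $\alpha$ and its complement, and on $|\beta\cap\widetilde{\pi}(\widetilde{L})|$, produces a new witness with strictly fewer enclosed crossings, contradicting minimality.

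The main obstacle will be this final case analysis: in each configuration, I must build a new witness disk whose boundary meets $\widetilde{\pi}(\widetilde{L})$ in exactly two transverse points and whose interior is not a single arc, either from $E$, from $\widetilde{D}\setminus\mathrm{int}(E)$, or from a cut-and-paste with a subdisk of $\tau(\widetilde{D})$. The alternating hypothesis, via \Cref{Thm:AlternatingDefs}, should help to rule out degenerate configurations such as closed-loop components of the diagram that could otherwise appear inside $E$.
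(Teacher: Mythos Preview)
Your plan follows the same overall strategy as the paper: the $(\Leftarrow)$ direction is handled identically by lifting, and for $(\Rightarrow)$ both you and the paper aim to produce a subdisc of $\widetilde{D}$ that is disjoint from its $\tau$-translate and still meets the diagram exactly twice on its boundary, so that it projects to a witness on $S$. The difference is organisational. The paper works directly with the components of $\widetilde{D}\setminus\tau(\widetilde{D})$: it observes that each such component $E$ already has $\operatorname{int}(E)\cap\tau(E)=\emptyset$, and then runs a short counting argument (the diagram meets each of $\partial\widetilde{D}$ and $\partial\tau(\widetilde{D})$ twice, and these account for all intersections with $\partial(\widetilde{D}\setminus\tau(\widetilde{D}))$) to locate a component $E$ whose boundary meets $\Ddiagram$ exactly twice. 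Your route via a minimal-crossing witness and an outermost arc of $\tau(\partial\widetilde{D})\cap\widetilde{D}$ reaches the same $E$ but packages the argument as a descent.

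One caution about your framing: you promise that each step of the case analysis produces a witness with \emph{strictly fewer} enclosed crossings, but this is not automatic. If $|\alpha\cap\Ddiagram|=|\beta\cap\Ddiagram|=0$, then $E$ is diagram-free and removing it leaves a disc with the same crossing count; and if $|\alpha\cap\Ddiagram|=|\beta\cap\Ddiagram|=2$, the subdisc $E$ has four boundary points with the diagram and is not itself a witness. These cases are handleable (for the first, use a secondary complexity such as $|\partial\widetilde{D}\cap\tau(\partial\widetilde{D})|$; for the second, pass to $\widetilde{D}\setminus\operatorname{int}(E)$ or a cut-and-paste), but they show the descent is not as clean as stated. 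The paper's choice to take $E$ as an entire component of $\widetilde{D}\setminus\tau(\widetilde{D})$ sidesteps this, since its counting bounds the \emph{total} number of diagram intersections along $\partial(\widetilde{D}\setminus\tau(\widetilde{D}))$ by four, distributed over all components, which forces some component to carry exactly two. That is the cleaner move here; your minimality is not needed once you take that viewpoint.
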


\begin{proof}
Suppose $D$ is a disc on $\widetilde{S}$ with $\bdy D$ meeting the diagram $\Ddiagram$ exactly twice. If $\tau(D)\cap D=\emptyset$, then $D$ projects under the double cover $p \from \widetilde{S}\to S$ to a disc on $S$ meeting the diagram $\pi(L)$ exactly twice, and $p(D)$ has no crossings in its interior if and only if $D$ also has no crossings in its interior.

If $\tau(D) \cap D \neq \emptyset$, then $D\setminus\tau(D)$ consists of one or more disc components. For each component $E$ of $D\setminus\tau(D)$, the interior of $E$ is disjoint from $\tau(E)$, because $\tau$ is a local homeomorphism and $E$ is an embedded disc. We may isotope $\bdy E$ slightly so that $E$ is disjoint from $\tau(E)$.

We claim that some component of $D\setminus \tau(D)$ is a disc $E$ with boundary meeting the diagram $\Ddiagram$ exactly twice. The boundary of $D\setminus \tau(D)$ consists of arcs of $\bdy D\setminus \tau(D)$ and arcs $\bdy\tau(D)\cap D$. Note that if $\Ddiagram$ meets $\bdy D\setminus \tau(D)$, then it meets a disc component $E$ of $D\setminus \tau (D)$. Then $\Ddiagram$ must enter and exit $E$, hence it meets $\bdy E$ at least twice. Suppose $\Ddiagram$ meets $\bdy D$ in $\bdy D \cap \tau(D)$. Then applying $\tau$, $\Ddiagram$ meets $\tau(\bdy D)$ in $\tau (\bdy D\cap \tau(D)) = \tau(\bdy D)\cap D$. Because $\tau$ is a homeomorphism, this is $\bdy(\tau (D))\cap D$, which lies on the boundary
of $D\setminus \tau(D)$. Thus again $\Ddiagram$ must enter and exit a disc component $E$ of $D\setminus \tau(D)$, so it meets $\bdy E$ at least twice. Because $\bdy D$ meets $\Ddiagram$ only twice, if $\bdy E$ meets $\Ddiagram$ more than twice, the intersections must lie on $\bdy(\tau (D))\cap D$. We claim that $\partial(\tau(D))\cap D$ cannot meet $\Ddiagram$ more than twice. This is because each such intersection lies on $\partial \tau(D)$, hence corresponds to an intersection with $\tau(\bdy \tau(D))=\bdy D$, and we know $\Ddiagram$ meets $\bdy D$ exactly twice. Thus there is a disc component $E$ of $D\setminus\tau(D)$ meeting the diagram exactly twice.
After a slight isotopy, its projection to $S$ is embedded and meets the diagram twice. It bounds crossings in its interior if and only if the same is true for $E$.
\end{proof}

\begin{definition}[Reduced alternating]
A generalised link diagram $\pi(L)$ on a nonorientable projection surface $S$ is \emph{reduced alternating} if $\pi(L)$ is weakly prime, alternating, and each component projects to at least one crossing on $S$.
\end{definition}

\begin{definition}[Representativity]
Let $S$ be a nonorientable projection surface in a compact, orientable irreducible 3-manifold $M$, and let $L$ be a link in $S\widetilde{\times} I$ in $M$. 
Define the \emph{representativity} $r(\pi(L), S)$ to be the minimum number of intersections between $\widetilde{\pi}(\widetilde{L})$ and any circle that bounds a compression disc for $\widetilde{S}=\partial(S\widetilde{\times}I)$ in $M\cut S:=\overline{M\setminus N(S)}$. If there are no compression discs for $\widetilde{S}$, then $r(\pi(L), S)=\infty$.
\end{definition}

\begin{example}
The representativity depends on the embedding of $S$ and the manifold $M$. Consider again the knots and links in Figure~\ref{Fig:KleinLink}, on the Klein bottle $S$. If $M = S\widetilde{\times}I$, then there are no compressing discs for $S$ in $M$, and the representativity satisfies $r(\pi(L),S)=\infty$ for every diagram. However, we will see in \refsec{Klein} that we can also embed $S$ in a prism manifold $M(p,q)$. Removing $S\widetilde{\times} I$ from $M(p,q)$ gives a solid torus, which has a compression disc, and hence we obtain finite representativity. The representativity will depend on $M(p,q)$ and the diagram.
\end{example}

On an orientable surface, an alternating link diagram is \emph{checkerboard colourable} if each complementary region of the diagram graph can be oriented such that the induced orientation on each boundary component runs from over to under or vice-versa. If each complementary region is a disc, that is, the diagram is \emph{cellular}, the diagram must be checkerboard colourable. 
	
A 4-valent graph $\Gamma$ on a nonorientable surface $S$ lifts to a 4-valent graph $\widetilde{\Gamma}$ on the orientable double cover $\widetilde{S}$. The graph $\Gamma$ is \emph{checkerboard colourable} on $S$ if its lift $\widetilde{\Gamma}$ admits a checkerboard coloring on $\widetilde{S}$, and the coloring is preserved by the deck transformation $\tau$.

\begin{example}
Of Matveev and Nabeeva's 17 diagrams of knots on the Klein bottle~\cite{MatveevNabeeva}, only two are checkerboard colourable, and these give only three knots that are checkerboard colourable (two with the same diagram, one alternating, and one not). These are the knots $2_6$, $2_5$, and $3_{16}$ shown in \reffig{KleinLink}. However, because they are cellular, all these diagrams are checkerboard colourable on $\widetilde{S}$. 
\end{example}

\begin{definition}
A link on a nonorientable surface is \emph{weakly generalised alternating} if it is reduced alternating and has representativity $r(\pi(L),S)\geq 4$, and $\Ddiagram$ is checkerboard colourable on $\widetilde{S}$. 
\end{definition}
Note this agrees with~\cite[Definition~2.9]{HowiePurcell} in the orientable case. Furthermore, we only require $\Ddiagram$ to be checkerboard colourable on $\widetilde{S}$, not $\pi(L)$ on $S$. 

Let $S$ be a connected nonorientable projection surface embedded in a compact orientable irreducible 3-manifold $M$. Define $\widetilde{M}$ to be the double of $M\cut S$ along $\widetilde{S}$. That is, $\widetilde{M}$ is obtained by gluing two copies of $M\cut S$ by reflection in $\bdy(M\cut S)=\widetilde{S}$. 

\begin{proposition} \label{Prop:doublecover}
Let $S$ be a connected nonorientable projection surface embedded in a compact orientable irreducible 3-manifold $M$. Let $L$ be a weakly generalised alternating link with a generalised diagram on $S$. Then:
\begin{enumerate}
\item $\widetilde{M}$ is a double cover of $M$. 
\item $\widetilde{M} \setminus \widetilde{L}$ is a double cover of $M\setminus L$.
\end{enumerate}	
\end{proposition}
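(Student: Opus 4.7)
The plan is to construct the cover $p\from \widetilde M\to M$ explicitly, using the orientable double cover of a regular neighbourhood of $S$. Let $N=S\widetilde\times I$ be a closed regular neighbourhood of $S$ in $M$, so that $M = (M\cut S)\cup N$, glued along the common boundary $\widetilde S = \partial N$. Let $q\from \widetilde N = \widetilde S\times I\to N$ be the orientable double cover recalled in \Cref{Sec:Def}, with deck transformation $(\tau,\iota)$; each of the two boundary components of $\widetilde N$ is a copy of $\widetilde S$ which $q$ carries homeomorphically onto $\partial N = \widetilde S$.

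First I would form a candidate cover $\widehat M$ by taking two disjoint copies of $M\cut S$ and gluing them to the two boundary components of $\widetilde N$, via $q|_{\partial\widetilde N}$ composed with the identification $\partial N = \partial(M\cut S)$ inherited from the decomposition of $M$. Define $p\from \widehat M\to M$ to be the identity $M\cut S\hookrightarrow M$ on each of the two copies, and to be $q\from \widetilde N\to N\hookrightarrow M$ on the collar. The two definitions agree on the two interior gluings by construction, so $p$ is a well-defined continuous map. It is a local homeomorphism and 2-to-1: points of $\mathrm{int}(M\cut S)$ have one preimage in each copy of $M\cut S$; points of $\mathrm{int}(N)\setminus S$ have two preimages in $\mathrm{int}(\widetilde N)$; and points of $S$ have two preimages in $\widetilde S\times\{0\}\subset\widetilde N$. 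Hence $p$ is a 2-fold covering, proving (1) modulo the identification $\widehat M\cong \widetilde M$.

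To identify $\widehat M$ with $\widetilde M$, observe that $\widetilde N\cong \widetilde S\times I$ is a bicollar of its central fibre $\widetilde S\times\{0\}$. By the collar neighbourhood theorem, removing $\widetilde N$ and gluing the two copies of $M\cut S$ directly along $\widetilde S$ yields a homeomorphic manifold. A careful tracking of the composed gluing maps then shows that the induced identification between the two $\widetilde S$-boundaries of $M\cut S$ is the orientation-reversing involution $\tau$, which matches the ``reflection in $\widetilde S$'' of the paper's definition and makes $\widetilde M$ orientable. Part~(2) follows immediately: $L\subset N$, so $p^{-1}(L) = q^{-1}(L) = \widetilde L \subset \widetilde N \subset \widetilde M$, and restricting $p$ to complements gives the double cover $\widetilde M\setminus\widetilde L\to M\setminus L$.

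The main obstacle is the final identification of $\widehat M$ with the paper's $\widetilde M$. This hinges on the correct interpretation of ``reflection in $\widetilde S$'': the gluing of the two copies of $M\cut S$ must incorporate the orientation-reversing deck transformation $\tau$ of the orientable double cover $\widetilde S\to S$, rather than the identity on $\widetilde S$. This is precisely what ensures the resulting free involution on $\widetilde M$ is orientation-preserving, so that its quotient is the orientable manifold $M$ rather than a manifold with boundary or a disconnected cover.
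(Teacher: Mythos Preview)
Your approach is essentially the same as the paper's: decompose $M$ as $(M\cut S)\cup N$ with $N\cong S\widetilde\times I$, take the orientable double cover $q\from \widetilde S\times I\to N$ on the collar, the identity on each of the two copies of $M\cut S$, and check the pieces match on the common boundary $\widetilde S\times\{\pm 1\}$; part~(2) is then immediate by restricting $q$. The only cosmetic difference is that the paper starts from its defined $\widetilde M$ and builds the covering map directly, while you first assemble $\widehat M$ with the map baked in and then identify $\widehat M\cong\widetilde M$; your extra care in tracking the role of $\tau$ in the gluing (and hence in interpreting the paper's phrase ``reflection in $\widetilde S$'') makes explicit what the paper leaves implicit when it asserts the maps agree on $\widetilde S\times\{\pm 1\}$.
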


\begin{proof}
Take $N$ to be a tubular neighborhood of $S$ in $M$, and $\widetilde{N}$ a tubular neighborhood of $\widetilde{S}$ in $\widetilde{M}$. Since $M$ is orientable, $N$ is homeomorphic to $S\widetilde{\times}I$. Furthermore, $\widetilde{N}$ is homeomorphic to $\widetilde{S}\times I$. We can construct a map $q\from \widetilde{N} \to N$ that is both the quotient map for the twisted $I$-bundle $N$ and a double cover. Since $\widetilde{M}\setminus\widetilde{N}\cong \widetilde{M}\cut \widetilde{S}$, the manifold $M\setminus \widetilde{N}$ is homeomorphic to two disjoint copies of $M\cut S$ by definition of doubling. Note $M\setminus N\cong M\cut S$. Then the map $\widetilde{M}\setminus \widetilde{N} \to M\setminus N$, mapping each copy of $M\cut S$ by the identity, is also a double cover. It agrees with $q$ on the common boundary $\widetilde{S}\times \{\pm 1\}$, giving a continuous map that is a double cover of $M$.
	
For the link complement $M\setminus L$ and the doubled complement $\widetilde{M} \setminus \widetilde{L}$, we can also cut the complement along a neighborhood of the projection surface. We write $\widetilde{M}\setminus \widetilde{L}=(\widetilde{M}\cut \widetilde{S})\cup (\widetilde{S} \times I\setminus \widetilde{L})$, where  $\widetilde{M}\cut \widetilde{S}$ is two copies of $M\cut S$. Furthermore, $M\setminus L=(M\cut S)\cup(S\widetilde{\times} I \setminus L)$. Thus we only need to show that $\widetilde{S}\times I\setminus \widetilde{L}\to S\widetilde{\times}I\setminus L$ is a double cover. This is the restriction of the double covering and quotient map $q: \widetilde{S}\times I\to S\widetilde{\times}I$. Since $\widetilde{L}=q^{-1}(L)$ by definition, the restriction of $q$ on $\widetilde{S}\times I\setminus \widetilde{L}$ is indeed a double cover.
\end{proof}

\begin{theorem}\label{Thm:WGA}
If $\pi(L)$ is a weakly generalised alternating diagram on a nonorientable surface $S$ embedded in a compact orientable irreducible 3-manifold $M$, then $\widetilde{L}$ is a weakly generalised alternating link in $\widetilde{M}$.
\end{theorem}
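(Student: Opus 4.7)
The plan is to verify each condition in the definition of weakly generalised alternating for $\widetilde{L}$ in $\widetilde{M}$, relying on the machinery assembled earlier in the paper. Using the Howie--Purcell orientable definition, one must check: $\widetilde{S}$ is a connected orientable projection surface in the compact, orientable, irreducible $\widetilde{M}$ (with $\widetilde{M}\setminus\widetilde{S}$ irreducible); the diagram $\widetilde{\pi}(\widetilde{L})$ is reduced alternating on $\widetilde{S}$ (alternating, weakly prime, every component projects to a crossing); the representativity $r(\widetilde{\pi}(\widetilde{L}),\widetilde{S})\ge 4$; and $\widetilde{\pi}(\widetilde{L})$ is checkerboard colourable.

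The diagram-level conditions come almost for free. Alternating on $\widetilde{S}$ is the $S$-alternating condition, which by \refthm{AlternatingDefs} is equivalent to the alternating hypothesis on $\pi(L)$. Weak primeness is \reflem{DoubleWeaklyPrime}. Checkerboard colourability on $\widetilde{S}$ is built into the definition of weakly generalised alternating on $S$. For the condition that each component of $\widetilde{L}$ has a crossing, pick a component $\widetilde{K}\subset\widetilde{L}$ covering a component $K\subset L$. Reduced alternating gives a crossing $x$ of $K$, and any $\widetilde{x}\in q^{-1}(x)\cap\widetilde{K}$ is a crossing of $\widetilde{\pi}(\widetilde{L})$ since $q$ is a local homeomorphism. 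Both strands at $\widetilde{x}$ lift the two strands of $\pi(L)$ at $x$, so by local connectedness they both lie in the same component of $\widetilde{L}$ as $\widetilde{x}$, namely $\widetilde{K}$; hence $\widetilde{K}$ has a self-crossing.

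For the representativity, observe that $\widetilde{M}\cut\widetilde{S}$ is a disjoint union of two copies of $M\cut S$. Any compression disc $D$ for $\widetilde{S}$ in $\widetilde{M}\cut\widetilde{S}$ lies in a single copy by connectedness, where it is a compression disc for $\widetilde{S}=\partial(S\widetilde{\times}I)$ in $M\cut S$ in the sense of our definition; conversely every such disc in $M\cut S$ embeds as a compression disc in $\widetilde{M}\cut\widetilde{S}$. Since $\widetilde{\pi}(\widetilde{L})$ is the same subset of $\widetilde{S}$ in both pictures, the minimum intersection counts agree, giving $r(\widetilde{\pi}(\widetilde{L}),\widetilde{S})=r(\pi(L),S)\ge 4$.

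The main obstacle is the ambient topology of $\widetilde{M}$. Connectedness of $\widetilde{S}$ holds since it is the orientable double cover of a connected nonorientable surface. Compactness of $\widetilde{M}$ is clear, and orientability is routine: orient the two copies of $M\cut S$ oppositely so that the reflection gluing is orientation-preserving. Irreducibility of $\widetilde{M}\setminus\widetilde{S}$ follows because it is two disjoint copies of $M\setminus S$, which is irreducible by \refdef{GenProjSfce}. The delicate point is irreducibility of $\widetilde{M}$, since finite covers of irreducible 3-manifolds need not be irreducible in general. Here I would use \refprop{doublecover} together with the equivariant sphere theorem of Meeks--Simon--Yau, or argue directly by isotoping a putative essential sphere in $\widetilde{M}$ to minimise intersection with $\widetilde{S}$, and then analysing innermost disc components in each copy of $M\cut S$ via irreducibility of $M\setminus S$.
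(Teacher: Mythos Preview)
Your proof is correct and follows the same verification-by-definition strategy as the paper: alternating via \refthm{AlternatingDefs}, weakly prime via \reflem{DoubleWeaklyPrime}, checkerboard colourability and representativity $\ge 4$ essentially by how the nonorientable definitions were set up, and the crossing condition from reduced alternating. The paper's own proof is terser and does not discuss the ambient topology of $\widetilde{M}$ at all.

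You are in fact more careful than the paper in one place: you flag irreducibility of $\widetilde{M}$ as the delicate point and propose Meeks--Simon--Yau or an innermost-disc argument. Since \refprop{doublecover} exhibits $\widetilde{M}$ as a finite cover of the compact orientable irreducible $M$, this follows from the sphere theorem via $\pi_2(\widetilde{M})\cong\pi_2(M)=0$, so there is no real gap, but the paper simply omits this check. One minor over-reach: for the crossing condition you argue that each $\widetilde{K}$ has a \emph{self}-crossing, but the definition only requires that each component passes through some crossing vertex; your argument already gives that weaker statement (and the ``both strands lie in $\widetilde{K}$'' step can fail if the original crossing involves two distinct components of $L$).
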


\begin{proof}
We check each requirement for a weakly generalised alternating link on an orientable surface, in~\cite[Definition~2.9]{HowiePurcell}. 
Consider $\Ddiagram$ on $\widetilde{S}$. The diagram $\Ddiagram$ is alternating on $\widetilde{S}$ by Theorem~\ref{Thm:AlternatingDefs}, and weakly prime by  Lemma~\ref{Lem:DoubleWeaklyPrime}. It meets the projection surface $\widetilde{S}$ in at least one crossing by our definition of reduced alternating and the fact that we only consider connected projection surfaces. The diagram $\Ddiagram$ is checkerboard colourable by definition. Its representativity is at least four by definition. Therefore, the diagram $\Ddiagram$ is a weakly generalised alternating diagram on $\widetilde{S}$ in $\widetilde{M}$.
\end{proof}

\begin{proof}[Proof of Theorem~\ref{Thm:hyp}]
By Theorem~\ref{Thm:WGA}, $\widetilde{L}$ is weakly generalised alternating. Furthermore, $\widetilde{M}\cut \widetilde{S}$ is atoroidal and anannular since $M\cut S$ is atoroidal and anannular. If $S\ncong P^2$, the surface $\widetilde{S}$ has genus at least one, implying $\widetilde{M}\setminus \widetilde{L}$ is hyperbolic by~\cite[Theorem~4.2]{HowiePurcell}. If $S\cong P^2$, then by irreducibility of $M$, $\Ddiagram$ is a classical alternating link in $S^3$, and by~\cite{Menasco:AltLinks} it is hyperbolic if and only if it is not a $(2,q)$-torus link, if and only if $\pi(L)$ is not a string of bigons arranged end to end. 
Proposition~\ref{Prop:doublecover} implies $M\setminus L$ is double-covered by $\widetilde{M}\setminus \widetilde{L}$, hence it is also hyperbolic if and only if $\widetilde{M}\setminus \widetilde{L}$ is hyperbolic.
\end{proof}

We obtain other immediate geometric consequences of \refthm{WGA}, for example for hyperbolic volumes and the geometry of embedded surfaces, which we record below. We omit definitions since we do not need them here; they are in ~\cite{HowiePurcell}. 

\begin{corollary}
Let $L$, $S$, $M$ be as in Theorem~\ref{Thm:hyp}, with $S\ncong \RR P^2$. Suppose further that if $\bdy M$ is nonempty, then $\bdy M$ is incompressible in $M$. Then:
\begin{enumerate}
\item[(1)] If $M\setminus L$ admits two checkerboard surfaces, then they are quasifuchsian. Otherwise, the checkerboard surfaces of $\Ddiagram$ project to an immersed quasifuchsian surface, with self intersections at crossing arcs. 
\item[(2)] The volume of $M\setminus L$ is bounded below:
 \[ vol(M\setminus L)\geq \frac{v_8}{4}(tw(\Ddiagram)-2 \chi(S)-2 \chi(\bdy M))\]
\end{enumerate}
\end{corollary}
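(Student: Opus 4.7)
The plan is to pass to the orientable double cover $\widetilde{M}\setminus\widetilde{L}\to M\setminus L$ furnished by \refprop{doublecover} and invoke the corresponding results of \cite{HowiePurcell} applied to the link $\widetilde{L}$ on $\widetilde{S}$ in $\widetilde{M}$, which is weakly generalised alternating by \refthm{WGA}. Both conclusions should then follow by pushing these orientable-surface results down through the degree-two cover.

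For (1), apply the Howie--Purcell quasifuchsian checkerboard theorem to conclude that the two checkerboard surfaces $B$ and $W$ of $\Ddiagram$ are quasifuchsian in $\widetilde{M}\setminus\widetilde{L}$. The deck transformation $\tau$ either preserves each of $B,W$ or interchanges them; these two cases correspond precisely to whether or not $\pi(L)$ is checkerboard colourable on $S$. If $\tau$ preserves the colouring, $p(B)$ and $p(W)$ are embedded in $M\setminus L$ as the two checkerboard surfaces of $\pi(L)$, each a free $\ZZ/2$-quotient of a quasifuchsian surface; quasifuchsianness descends because $\pi_1(p(B))$ is an index-two extension of the quasifuchsian group $\pi_1(B)$ inside $\pi_1(M\setminus L)$, and the limit set of a non-elementary Kleinian group coincides with that of any finite-index subgroup. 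If $\tau$ swaps $B$ and $W$, the common image $F=p(B)=p(W)\subset M\setminus L$ is a single surface into which $B\sqcup W$ maps generically 2-to-1, with self-intersection locus the projection of the crossing arcs paired up by $\tau$; the same finite-index limit-set argument shows $F$ is quasifuchsian.

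For (2), apply the Howie--Purcell volume bound for orientable projection surfaces to $\widetilde{L}\subset\widetilde{M}$ to obtain
\[
\vol(\widetilde{M}\setminus\widetilde{L})\geq \frac{v_8}{2}\bigl(tw(\Ddiagram)-\chi(\widetilde{S})-\chi(\bdy\widetilde{M})\bigr),
\]
and then use that hyperbolic volume doubles under the degree-two cover, together with $\chi(\widetilde{S})=2\chi(S)$ and $\chi(\bdy\widetilde{M})=2\chi(\bdy M)$ since $\widetilde{S}\to S$ and $\bdy\widetilde{M}\to\bdy M$ are each degree-two covers. Substituting these relations into the bound and dividing by two yields the stated inequality. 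The main obstacle is the case analysis in (1): in particular, pinning down the self-intersection locus in the swap case as the image of the crossing arcs, and verifying the quasifuchsian descent through the index-two extension of fundamental groups in $\pi_1(M\setminus L)$. The descent itself is routine given the finite-index limit-set fact, but the local geometry of the projected surface near each crossing requires careful bookkeeping to identify exactly which arcs are self-identified.
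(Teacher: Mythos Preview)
Your approach is the same as the paper's: lift to the double cover via \refprop{doublecover}, invoke \refthm{WGA} to know $\widetilde{L}\subset\widetilde{M}$ is weakly generalised alternating, apply \cite[Theorem~6.10 and Corollary~5.9]{HowiePurcell}, and descend. The paper records this as a one-line citation; you have written out the descent explicitly, including the halving of volume, the doubling of Euler characteristics, and the dichotomy for (1) according to whether $\tau$ preserves or swaps the checkerboard colours. That bookkeeping is correct, and your identification of the self-intersection locus with the projected crossing arcs in the swap case is right.
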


This is a direct consequence of Theorem~\ref{Thm:WGA} and ~\cite[Theorem~6.10 and~Corollary~5.9]{HowiePurcell}. Item (1) extends similar results in the classical case~\cite{Adams:Quasifuchsian, FKP:quasifuchsian}. Item (2) extends work of 
Lackenby on volumes~\cite{Lackenby:AltVol}.

\section{Chunk Decomposition}\label{Sec:Chunk}
We can decompose any alternating link complement on a nonorientable projection surface into chunks, defined by Howie and Purcell~\cite{HowiePurcell}. These are a generalisation of the checkerboard decomposition of Menasco~\cite{menasco1983polyhedral}; see also Aitchison and Rubinstein~\cite{AitchisonRubinstein}, Lackenby~\cite{Lackenby:AltVol}, and Futer and Gu{\'e}ritaud~\cite{FuterGueritaud} for a related decomposition. The decomposition simplifies the process of identifying embedded surfaces in the link complement, and we will use it in the next section. 

A \emph{chunk} $C$ is a compact, orientable, irreducible 3-manifold with boundary $\partial C$ containing an embedded (possibly disconnected) non-empty graph $\Gamma$ with all vertices having valence at least 3. We allow components of $\partial C$ to be disjoint from $\Gamma$, provided any such component is incompressible. 
Regions of $\partial C \setminus \Gamma$ are called \emph{faces}. A component of $\partial C$ disjoint from $\Gamma$ is called an \emph{exterior face}. Other faces are called \emph{interior faces}.
A \emph{truncated chunk} is a chunk for which a regular neighborhood of each vertex of $\Gamma$ has been removed. The newly produced faces and edges after removal are called \emph{truncation faces} and \emph{truncation edges}, as in Purcell and Tsvietkova~\cite{PurcellTsvietkova:Standard, PurcellTsvietkova:Polynomial}. 
	
A \emph{(truncated) chunk decomposition} of a 3-manifold $M$ is a decomposition of $M$ into (truncated) chunks (possibly one), such that $M$ is obtained by gluing chunks by homeomorphisms of faces on $\partial C$, with edges mapping to edges homeomorphically.

\begin{construction}\label{Const:ChunkDecomp}
Let $L$ be a link embedded in a compact, irreducible, orientable 3-manifold $M$, with a generalised diagram $\pi(L)$ that is reduced alternating on a nonorientable projection surface $S$. Suppose $\pi(L)$ has $k$ crossings.
Consider $M\cut S$. Observe that  $\bdy(M\cut S)\setminus \bdy M$ is homeomorphic to $\widetilde{S}$. The chunk $C$ will be homeomorphic to $M\cut S$; we need to determine the graph $\Gamma$ on its boundary $\widetilde{S}$. 
		
In the classical alternating setting, informally, one finds the faces of the decomposition by viewing the two ball components of $S^3$ cut along the plane of projection as two balloons, and one expands those balloons until they bump in regions complementary to the link diagram. In the setting of $\pi(L)$ on nonorientable $S$, we expand a balloon shaped like $M\cut S$. That is, we expand $M\cut S$ towards the removed twisted $I$-bundle, and it bumps itself in regions complementary to the link diagram. These will be faces, glued where regions bump. See \Cref{Fig:Bumping}, left.

Continue to expand $M\cut S$ towards $S$. The faces will meet in pairs at crossing arcs. For a fixed crossing, and a fixed local orientation at that crossing, two pairs of faces will meet at an over-crossing, and the opposite pairs of faces will meet at the under-crossing. The crossing arcs will be ideal edges of $\Gamma$. Put two copies of each crossing arc onto either side of the crossing; that is, put two arcs at one lift to $\widetilde{S}$ and others at the other lift. See \Cref{Fig:Bumping}. This gives a total of $4k$ ideal edges, which will glue into $k$ crossing arcs.
		
\begin{figure}
  \includegraphics{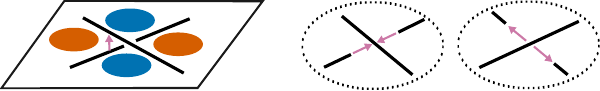}
  \caption{Left: Local picture of the bumping pattern near a crossing. Right: Locations of edges, shown with orientation on a disc $D$ and $\tau(D)$. Note the four pink edges are identified to a single crossing arc.}
  \label{Fig:Bumping}
\end{figure}
		
The boundary of $M\cut S$ is now decorated with faces and edges, as well as remnants of the link diagram. Each remnant of link diagram has an endpoint at an under-crossing in $\widetilde{S}$,  and runs through an over-crossing to end at a second under-crossing. Shrink this remnant to a single ideal vertex, located at the position of the over-crossing. This will pull the ideal edges at the under-crossings into the over-crossing, giving a 4-valent vertex. See \Cref{Fig:ShrinkExpand}. 
		
\begin{figure}
  \begin{overpic}{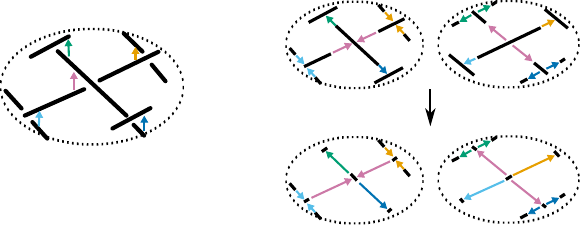}
    \put(50,15){$D$}
    \put(95,15){$\tau(D)$}
  \end{overpic}
  \caption{Left: Portion of link diagram showing adjacent crossings, with crossing arcs indicated. Right top: corresponding ideal edges; Right bottom: shrink remnants of link to ideal vertices.}
  \label{Fig:ShrinkExpand}
\end{figure}
The chunk decomposition is now complete. We summarize its properties.
\end{construction}

\begin{theorem}[Chunk decomposition]\label{Thm:ChunkDecomp}
Let $L$ be an alternating link in a compact, irreducible, orientable 3-manifold $M$, with alternating generalised diagram $\pi(L)$ on a nonorientable projection surface $S$. Then $M\setminus L$ admits a chunk decomposition.
\begin{enumerate}
\item[(1)] The single chunk is homeomorphic to $M\cut S$. 
\item[(2)] The embedded graph on $\bdy(M\cut S) = \widetilde{S}$ is identical to the diagram graph $\widetilde{\pi}(\widetilde{L})$, with ideal vertices corresponding to the crossings.
\item[(3)] Each region of $\widetilde{S}\setminus\widetilde{\pi}(\widetilde{L})$ is glued to its translate under the covering map $\tau$. The gluing is by first applying $\tau$, then rotating either clockwise or anticlockwise to the next adjacent edge, with direction depending on whether the boundary of the region runs from over- to under-crossing in a clockwise or anticlockwise direction. 
\item[(4)] Edges correspond to crossing arcs, and are glued in fours. At each ideal vertex, two opposite edges are glued together.
\end{enumerate}
\end{theorem}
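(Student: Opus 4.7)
The plan is to lift to the orientable double cover, apply the Howie--Purcell chunk decomposition from \cite{HowiePurcell} there, and then descend via the deck involution established in the proof of \refprop{doublecover}. By \refthm{WGA}, $\widetilde{L}\subset \widetilde{S}\times I \subset \widetilde{M}$ is weakly generalised alternating, so its complement admits a chunk decomposition in the sense of \cite{HowiePurcell}. Since by construction $\widetilde{M}$ is the double of $M\cut S$ along $\widetilde{S}$, the surface $\widetilde{S}$ separates $\widetilde{M}$ into two chunks $C_1,C_2$, each homeomorphic to $M\cut S$. Each carries the same boundary graph $\widetilde{\pi}(\widetilde{L})$ with ideal vertices at the crossings, and the orientable chunk decomposition glues matching faces of $C_1$ and $C_2$ by the standard Menasco one-edge rotation, while the two ideal edges at each crossing (one on each chunk) are glued as a pair.

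Next, I would push this structure down through the covering map $q\colon \widetilde{M}\setminus\widetilde{L}\to M\setminus L$. The deck transformation $T$ covers the identity on $M$, swaps $C_1$ with $C_2$, and restricts to $\tau$ on the shared surface $\widetilde{S}$, since near the projection surface $T$ acts as $(x,t)\mapsto (\tau(x),-t)$ by the proof of \refprop{doublecover}. Quotienting by $T$ identifies $C_1$ with $C_2$ into a single chunk homeomorphic to $M\cut S$, giving (1); the boundary graph $\widetilde{\pi}(\widetilde{L})$ on the single surface $\widetilde{S}$ is preserved, giving (2). The face gluing $F_{C_1}\leftrightarrow F_{C_2}$ from the orientable case becomes $F\leftrightarrow \tau(F)$ on the single chunk, composed with the same one-edge rotation, which is the content of (3); by \reflem{StrandRegion}, $\widetilde{\pi}(\widetilde{L})$ is region alternating, so the Menasco convention forces a CW rotation for one colour class and CCW for the other, matching whether $\partial F$ runs over-to-under CW or CCW. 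Similarly, the pair of edges at each crossing of $\widetilde{\pi}(\widetilde{L})$ combines with the $\tau$-pairing of the two lifts of each crossing of $\pi(L)$ to assemble exactly $4$ ideal edges per crossing of $\pi(L)$ into a single crossing arc, with the two opposite edges at each ideal vertex glued locally as in (4).

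The main obstacle is verifying that the Menasco rotation direction survives the quotient in precisely the form stated in (3). This is a local check at each crossing, using the model disc $D_x$ together with its $\tau$-translate $\tau(D_x)$ from \refdef{Crossing}, as depicted in \reffig{Bumping}. Once the over/under convention and the orientations on $D_x$ and $\tau(D_x)$ are fixed, the CW/CCW choice in (3) is forced, and the remaining items are bookkeeping: counting $2k$ ideal vertices in $\widetilde{\pi}(\widetilde{L})$, noting each contributes $2$ ideal edges at the single chunk level, and confirming that the orbit of the Menasco pairing combined with $T$ partitions these $4k$ edges into $k$ groups of four, one per crossing of $\pi(L)$.
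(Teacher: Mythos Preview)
Your approach differs from the paper's. The paper builds the decomposition directly via Construction~\ref{Const:ChunkDecomp}: it takes $M\cut S$, expands it through the twisted $I$-bundle toward $S$, and reads off faces, edges, and ideal vertices locally at each crossing from the pictures in Figures~\ref{Fig:Bumping} and~\ref{Fig:ShrinkExpand}; items~(1)--(4) are then verified by inspecting this local model. You instead lift to $\widetilde{M}$, apply the orientable Howie--Purcell decomposition as a black box to obtain two chunks, and quotient by the deck involution $T$ to collapse them into one. Both routes land on the same object. Yours is tidier in that it reuses the orientable machinery wholesale and makes the $\tau$-twist in~(3) appear automatically from the action of $T$ on $\widetilde{S}$; the paper's is more self-contained and exposes the edge-gluing combinatorics of~(3) and~(4) without any appeal to the double cover.

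There is, however, a genuine hypothesis gap in your write-up. You invoke \refthm{WGA} and \refprop{doublecover}, both of which are stated for \emph{weakly generalised alternating} diagrams, but \refthm{ChunkDecomp} assumes only that $\pi(L)$ is alternating: no weak primeness, no checkerboard colourability, no representativity bound. So as written, your argument formally requires more than the theorem gives. The repair is easy---the covering statement in \refprop{doublecover} is purely topological and its proof uses none of the WGA conditions, and the Howie--Purcell chunk \emph{construction} upstairs needs only an alternating diagram (primeness and representativity enter only later, in the normal-surface analysis)---but you should cite the construction directly rather than route through \refthm{WGA}.
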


\begin{proof}
The chunk is homeomorphic to $M\cut S$, which is a compact, orientable, irreducible 3-manifold with boundary $\widetilde{S}$, the orientable double cover of $S$. 
	
Regions of the graph $\Gamma$ on $\widetilde{S}$ come from regions of $\widetilde{S}\setminus\widetilde{\pi}(\widetilde{L})$, which are glued to regions identified across $S$, or via $\tau(S)$.
Note that $\tau$ reverses the orientation of $R$, as necessary for gluing the boundary of $M\cut S$.
Now note that edges on $R$ and $\tau(R)$ are identified by a shift in the clockwise or anticlockwise direction, as shown in \Cref{Fig:ShrinkExpand}. This is due to the fact that locally, on one side of a disc on $S$ the over-strand separates the two edges identified to the crossing arc, but on the other side the under-strand separates them. Thus we must follow the translation by $\tau$ by a rotation, with the rotation in opposite directions in adjacent faces.

The edges of the graph $\Gamma$ separate regions at crossings, and glue in sets of four to crossing arcs as described above. Their endpoints lie on remnants of the link, which have been collapsed to ideal vertices lying at over-crossings. When these remnants shrink to ideal vertices, the endpoint of an ideal edge is pulled along the link diagram, to take the place of the diagram edge in $\widetilde{\pi}(\widetilde{L})$, giving an isomorphism between that graph and $\Gamma$. Since the diagram is alternating, this gives 4-valent vertices, satisfying the definition of a chunk. 
\end{proof}

In the classical case, Thurston likened the face rotation (3) to a gear-rotation.

\begin{example}
The example of the decomposition for the link on the left of Figure~\ref{Fig:KleinLink} is shown in Figure~\ref{Fig:chunk-result}. Note that the faces labeled $1$ in that figure are glued; the one at the top must be translated by $\tau$, which reverses its orientation (flips it horizontally). It then must be rotated in the anticlockwise direction to match edges. 
\begin{figure}
  \centering
  \includegraphics{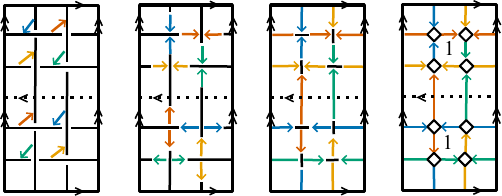}
  \caption{Construction of chunk decomposition for the link on the left of Figure~\ref{Fig:KleinLink}. Left to right: crossing arcs shown, these become ideal edges, shrink remnants of link to ideal vertices, truncate ideal vertices}
  \label{Fig:chunk-result}
\end{figure}
\end{example}

We study how surfaces behave inside each chunk using normal surfaces, generalising normal surfaces from Haken's 1961 work~\cite{haken1961theorie}. Such generalisations are used in many recent results, for example~\cite{FuterGueritaud, HowiePurcell, PurcellTsvietkova:Polynomial, PurcellTsvietkova:Standard}.

Following~\cite[Definition~3.7]{HowiePurcell}, a properly embedded surface $S$ in a truncated chunk $C$ is \emph{normal} if every component of $S\cap C$ is incompressible in $C$, if $S$ and $\bdy S$ are transverse to all faces and edges of $C$, and curves of $S\cap \bdy C$ intersect the faces of $C$ efficiently, namely:
A component of $S\cap \bdy C$ lying entirely in a face does not bound a disc in that face, and arcs of $S \cap\bdy C$ with endpoints on the same edge, or on a truncation edge and an adjacent interior edge, cannot cobound a disc with these edges. A properly embedded surface $(S, \partial S) \subset (M, \partial M)$ is \emph{normal} if, for every chunk $C$ in the chunk decomposition for $M$, $S\cap C$ is normal in $C$.

We will not use all the properties of the definition of normal, but we do use heavily~\cite[Theorem~3.8]{HowiePurcell}, which states that any essential, meridianally incompressible surface embedded in $M$ can be isotoped into normal form.

\section{Klein-bottly alternating links}\label{Sec:Klein}

In a paper from 1994~\cite{Adams:Toroidally}, Adams examines a class of links in lens spaces called \emph{toroidally alternating links}, in which he puts an alternating link diagram on the Heegaard torus of a lens space. In 1978~\cite{rubinstein1978one}, Rubinstein studied one-sided Heegaard splittings of 3-manifolds as a generalisation of Heegaard splittings. That is, 
when $M$ is a closed orientable 3-manifold and $S$ a closed nonorientable surface embedded in $M$, the pair $(M,S)$ is called a \emph{one-sided Heegaard splitting} if $M\cut S$ is a handlebody. 

A natural generalisation of toroidally alternating links is to consider a one-sided Heegaard splitting of $M$ along a Klein bottle $S$ with $M\cut S$ a single solid torus, with an alternating diagram on the Klein bottle. In 1979~\cite{rubinstein19793}, Rubinstein examines a class of 3-manifolds with finite fundamental group that contain an embedded Klein bottle. He shows these are exactly the manifolds that admit a one-sided Heegaard splitting with $M\cut K$ a solid torus. These are the \emph{prism manifolds}.

A prism manifold can be realized as $(p,q)$-Dehn filling the torus boundary of a twisted $I$-bundle over a Klein bottle $K$, and we denote it by $M(p,q)$. In the case where $p=1$, after Dehn filling we obtain $L(4q, 2q-1)$. If $p\neq 1$, then we obtain a spherical 3-manifold whose fundamental group is a central extension of a dihedral group. For more information, see Scott~\cite{scott1983geometries} or Lackenby and Schleimer~\cite{LackenbySchleimer}.

Let $L$ be a link in $M=M(p,q)$ that can be isotoped into a regular neighborhood of the Klein bottle $K$ in $M$, which is a twisted $I$-bundle $K\widetilde{\times} I$. The following definition is a direct generalisation of Adams:

\begin{definition}[Klein-bottly alternating links]\label{Def:KBalt}
Suppose that $L$ admits a generalised diagram $\pi(L)$ on the Klein bottle $K$ in $M(p,q)$. Then $L$ is called \emph{Klein-bottly alternating} if the diagram is alternating on $K$ and every nontrivial curve on $\widetilde{K}$ intersects $\Ddiagram$.
\end{definition}

Note that the second condition forces $\pi(L)$ to be cellular. Observe also that there are no representativity requirements. \Cref{Thm:hyp} immediately implies:

\begin{corollary}
Let $L$ be Klein-bottly alternating on $K$ in $M(p,q)$, and suppose that $r(\pi(L), K)>4$. Then the link $L$ is hyperbolic.
\end{corollary}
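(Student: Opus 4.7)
The plan is to derive this corollary as a direct application of \refthm{hyp} with $M = M(p,q)$ and $S = K$. The task is to check each hypothesis of \refthm{hyp} using the Klein-bottly alternating condition and the representativity bound $r(\pi(L),K) > 4$, and then conclude hyperbolicity in the non-$P^2$ branch of \refthm{hyp}, since $K$ is a Klein bottle.

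The ambient-manifold side is handled by Rubinstein's one-sided Heegaard splitting: $M(p,q)$ is closed, orientable, and irreducible (being a spherical 3-manifold), and $M(p,q)\cut K$ is a solid torus. Consequently, $M(p,q)\cut K$ is irreducible, so $K$ satisfies \refdef{GenProjSfce} as a nonorientable projection surface, and it is atoroidal; and because $\bdy M(p,q) = \emptyset$, the essential-annulus hypothesis of \refthm{hyp} is vacuous. Cellularity of $\pi(L)$ on $K$ follows from the Klein-bottly condition: any essential simple closed curve on $K$ lifts to a nontrivial curve on $\widetilde{K}$, which must meet $\Ddiagram$, so the original curve meets $\pi(L)$, and hence all complementary regions on $K$ are discs.

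For the diagrammatic hypotheses packaged as \emph{weakly generalised alternating}, the alternating condition and the representativity bound are given directly. The reduced-alternating requirements (weakly prime and at least one crossing per component) are the usual standing assumptions on an alternating diagram, and can be taken for granted or arranged by trivial reductions that preserve Klein-bottly alternating status. The final condition, that $\Ddiagram$ be checkerboard-colourable on $\widetilde{K}$, follows from $\Ddiagram$ being cellular and alternating on the torus $\widetilde{K}$: locally at each crossing, the alternating structure partitions the four surrounding region-corners into two classes, and because $\Ddiagram$ is cellular this local data extends to a global 2-colouring of the faces. Since $\pi(L)$ is already alternating on $K$, the colouring is $\tau$-equivariant, as required.

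I do not foresee a substantive obstacle: the corollary is essentially a bookkeeping exercise matching the Klein-bottly alternating definition and Rubinstein's one-sided Heegaard splitting to the hypothesis list of \refthm{hyp}. The only point deserving a line of justification is the checkerboard-colourability of $\Ddiagram$ on $\widetilde{K}$, and even this is the standard observation that cellular alternating diagrams on orientable surfaces carry canonical checkerboard colourings. The real content of the paper, by contrast, is deferred to \Cref{Thm:Kleinbottly}, which removes the representativity assumption at the cost of a dedicated meridian lemma.
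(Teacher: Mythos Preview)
Your proposal is correct and follows exactly the approach the paper intends: the paper gives no proof at all, stating only that ``\Cref{Thm:hyp} immediately implies'' the corollary, and you have simply written out the hypothesis-by-hypothesis verification that makes this implication explicit. One minor note: the definition of weakly generalised alternating in the paper only requires $\Ddiagram$ to be checkerboard colourable on $\widetilde{K}$, not that the colouring be $\tau$-equivariant, so your last clause about $\tau$-equivariance is unnecessary (though not wrong).
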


\begin{corollary}
Fix a cellular diagram of an alternating link on $K$. Then for all but finitely many $(p, q)$, $L$ is hyperbolic in $M(p,q)$.
\end{corollary}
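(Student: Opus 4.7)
The plan is to invoke the preceding corollary: once $r(\pi(L),K)>4$ for a given $(p,q)$, the complement $M(p,q)\setminus L$ is hyperbolic. I first check that the diagram is Klein-bottly alternating. Cellularity of $\pi(L)$ on $K$ lifts to cellularity of $\Ddiagram$ on $\widetilde K$: since each complementary region on $K$ is a simply connected disc, the double cover $\widetilde K\to K$ is trivial over it, so its preimage in $\widetilde K$ is a disjoint union of discs. Thus every essential simple closed curve on $\widetilde K$ must meet $\Ddiagram$, so the diagram satisfies \refdef{KBalt}.

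Next I express the representativity as a function of $(p,q)$. By construction of $M(p,q)$ as the Dehn filling of $K\widetilde\times I$ with slope $(p,q)$, the complement $M(p,q)\cut K$ is a solid torus whose meridian disc has boundary the simple closed curve of slope $(p,q)$ on $\widetilde K=\bdy(K\widetilde\times I)$, for a fixed basis of $H_1(\widetilde K)\cong\ZZ^2$. Every compression disc for $\widetilde K$ in this solid torus has boundary isotopic to this meridian, so $r(\pi(L),K)$ equals the minimum geometric intersection number $i_{p,q}$ of a $(p,q)$-curve on $\widetilde K$ with the graph $\Ddiagram$.

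The main content is to show $i_{p,q}\to\infty$ as $p^2+q^2\to\infty$. Model $\widetilde K$ as $\RR^2/\ZZ^2$ with the flat metric and lift $\Ddiagram$ to a periodic cellular graph $\widehat\Gamma\subset\RR^2$; its complementary regions are translates of finitely many compact discs, so they have a uniform diameter bound $D<\infty$. For any simple closed curve $\gamma$ on $\widetilde K$ in the class $(p,q)$, lift $\gamma$ to an arc $\hat\gamma$ from a point $x$ to $x+(p,q)$ in $\RR^2$, and project it orthogonally onto the line spanned by $(p,q)$. The image of $\hat\gamma$ has length at least $\sqrt{p^2+q^2}$, while the projection of each complementary region of $\widehat\Gamma$ has length at most $D$. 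A union bound forces $\hat\gamma$ to visit complementary regions on at least $\sqrt{p^2+q^2}/D$ distinct occasions, and consecutive visits are separated by crossings with $\widehat\Gamma$. Minimising over $\gamma$ gives $i_{p,q}\geq \sqrt{p^2+q^2}/D - 1$, which exceeds $4$ for all but finitely many coprime $(p,q)$. The only substantive step is this growth estimate; the rest is a direct reduction to the preceding corollary.
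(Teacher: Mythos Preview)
Your argument is correct and follows the same route as the paper: identify the representativity with the minimal intersection of the $(p,q)$-curve on $\widetilde K$ with $\Ddiagram$, then invoke the preceding corollary once this exceeds~$4$. The paper simply asserts that this intersection number is at most four for only finitely many $(p,q)$, whereas you supply an explicit growth estimate via the flat metric and a diameter bound on the lifted faces; you also spell out why a cellular $\pi(L)$ lifts to a cellular $\Ddiagram$, which the paper leaves implicit.
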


\begin{proof}
The boundary of the compression disc for $M(p,q)$ intersects the diagram the number of times the $(p,q)$-curve on the torus $\widetilde{K}$ meets $\Ddiagram$. For fixed $\Ddiagram$, this intersection number is less than four only for finitely many $p, q$.
\end{proof}

For example, this corollary holds for diagrams of Matveev and Nabeeva~\cite{MatveevNabeeva}.

In his 1994 work, Adams proved a meridian lemma for toroidally alternating knots in lens spaces. We generalise his result to Klein-bottly alternating links.

\begin{proposition}[Meridian Lemma]\label{Prop:MerLem}
Let $L$ be a Klein-bottly alternating link in the prism manifold $M:=M(p,q)$. If $M\setminus L$ contains a closed, orientable, incompressible, meridianally incompressible surface $F$, then $F$ can be isotoped to meet the solid torus $V$ defining $M(p,q)$ in an even number of meridianal discs for $V$.
\end{proposition}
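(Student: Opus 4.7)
The plan is to isotope $F$ into normal form with respect to the chunk decomposition of \Cref{Const:ChunkDecomp}, in which $M\setminus L$ is assembled from the single chunk $M\cut K\cong V$ whose boundary $\widetilde{K}$ carries the diagram $\widetilde{\pi}(\widetilde{L})$. Since $F$ is essential and meridianally incompressible, the analogue of~\cite[Theorem~3.8]{HowiePurcell} recorded at the end of \Cref{Sec:Chunk} lets us put $F$ in normal form. Then each component of $F\cap V$ is incompressible in the solid torus $V$, hence either a meridianal disc or a boundary-parallel annulus.

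The first step will be to eliminate all boundary-parallel annulus components. Any such annulus $A$ cobounds a solid-torus collar in $V$ with an annular region of $\widetilde{K}$; since $L\subset K\widetilde{\times} I$ is disjoint from $V$, this collar lies in $M\setminus L$, so I can isotope $A$ across the collar and slightly past $\widetilde{K}$ into $K\widetilde{\times} I$ without meeting $L$. Each such isotopy removes $A$ from $F\cap V$ and preserves both essentiality and meridianal incompressibility of $F$. After finitely many iterations, $F\cap V$ consists of exactly $n$ meridianal discs, and $F\cap \widetilde{K}$ is $n$ parallel curves on $\widetilde{K}$, each representing $\pm\mu_V \in H_1(\widetilde{K};\mathbb{Z})$, where $\mu_V$ is the primitive meridian class of $V$.

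The second step will show that $n$ is even using the vanishing second homology of prism manifolds. Every prism manifold has finite fundamental group \cite{rubinstein19793,scott1983geometries}, so Poincar\'e duality gives
\[
H_2(M;\mathbb{Z})\cong H^1(M;\mathbb{Z})\cong \operatorname{Hom}(H_1(M;\mathbb{Z}),\mathbb{Z})=0.
\]
Thus $F$ bounds a $3$-chain $W$ in $M$; although $W$ may pass through $L$, the torus $\widetilde{K}$ lies in $M\setminus L$, and after making $W$ transverse to $\widetilde{K}$ the intersection $W\cap \widetilde{K}$ is a $2$-chain on $\widetilde{K}$ with boundary $F\cap \widetilde{K}$. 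This forces $[F\cap \widetilde{K}] = 0$ in $H_1(\widetilde{K};\mathbb{Z})\cong \mathbb{Z}^2$. Orienting $F$ (possible since $F$ is $2$-sided in the orientable $M$) orients each meridianal disc, and if $n_+,n_-$ denote the numbers of discs whose oriented boundaries represent $+\mu_V$ and $-\mu_V$, the vanishing class gives $(n_+ - n_-)\mu_V = 0$. Since $\mu_V$ is primitive, $n_+ = n_-$ and $n=2n_+$ is even.

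The main technical point will be the annulus-removal step: one has to check that the local isotopy eliminating $A$ stays inside its disjoint pocket and does not disturb other components of $F$, which is clean because every such collar sits in $V$ away from $L$. Once $F\cap V$ consists solely of meridianal discs, the parity is then a short homological observation relying only on $H_2(M;\mathbb{Z}) = 0$.
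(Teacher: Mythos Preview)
Your overall plan matches the paper's, and the parity argument via $H_2(M;\ZZ)=0$ is correct and is a nice alternative to the paper's route through the Euler-characteristic count of \Cref{Lem:EulerChar} plus checkerboard colouring. The problem is the middle step, where you reduce $F\cap V$ to meridian discs.

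There are two gaps. First, incompressible properly embedded surfaces in a solid torus also include discs with inessential boundary on $\partial V$, which you do not address. Second, and more seriously, your annulus-removal isotopy need not exist: you verify that the collar $W$ between $A$ and its shadow $A'\subset\widetilde{K}$ avoids $L$, but that is the wrong obstruction. The real issue is that $W$ may contain \emph{other components of} $F\cap V$. Once any meridian disc is present, all essential curves of $F\cap\partial V$ are parallel meridians, and nothing prevents a meridian disc from having its boundary in the interior of $A'$ and hence sitting inside $W$; pushing $A$ across $W$ then makes $F$ self-intersect. The paper closes both holes with the alternating hypothesis, which your argument never uses after invoking normal form: \Cref{Lem:Curvealpha} combines the left--right rule (\Cref{Lem:LeftRight}) with meridianal incompressibility to rule out any curve of $F\cap\partial C$ bounding a disc on $\partial C$, and \Cref{Lem:MeridDisc} then eliminates annuli via a boundary compression located along an interior edge of the diagram. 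That your argument in this step uses nothing specific to the Klein-bottly alternating diagram is itself a warning sign.
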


To prove \Cref{Prop:MerLem}, we will use techniques generalised from~\cite{Adams:Toroidally} and~\cite{HowiePurcell}.
Let $M=M(p,q)$ and let $\pi(L)$ be a Klein-bottly alternating diagram on a Klein bottle $K$. Let $C$ be the chunk of $M\setminus L$ of \Cref{Thm:ChunkDecomp}, so $C$ is a solid torus.

\begin{lemma}[Left-right rule]\label{Lem:LeftRight}
A curve that enters a complementary region of $\Ddiagram$ with an over-crossing on the left must exit with an over-crossing on the right.
\end{lemma}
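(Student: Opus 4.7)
The plan is to reduce the rule to the region-alternating property of $\Ddiagram$ on the orientable surface $\widetilde{K}$, which holds by \Cref{Thm:AlternatingDefs}, together with a short orientation check made possible by the orientability of $\widetilde{K}$.

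First I would fix a complementary region $R$ of $\Ddiagram$ into which the curve enters, and orient $\partial R$ so that $R$ lies on its left. By the region-alternating property, after flipping this traversal direction if necessary, every boundary arc of $R$ runs from an over-crossing to an under-crossing. Thus each edge $e$ of $\partial R$ acquires a well-defined ``over-end'' (its starting crossing in the traversal direction) and ``under-end'' (its ending crossing), with the labels distributed consistently around $\partial R$.

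Next I would run the orientation check that underlies the rule. When the curve $\gamma$ enters $R$ transversely across an edge $e_1$, $\gamma$ points into $R$, and since $R$ lies on the left of the chosen traversal direction of $\partial R$, that traversal direction points to $\gamma$'s right at the entry point. Hence ``over-crossing on the left'' at entry is equivalent to the over-end of $e_1$ being the endpoint of $e_1$ opposite to the traversal direction, which matches what the region-alternating structure guarantees. When $\gamma$ exits $R$ across an edge $e_2$, $\gamma$ now points outward, $R$ sits behind $\gamma$, and the traversal direction of $\partial R$ at $e_2$ now appears to $\gamma$'s left. The same region-alternating structure at $e_2$ places the over-end of $e_2$ opposite to the traversal direction, which is now $\gamma$'s right. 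This gives ``over-crossing on the right'' at exit.

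The main care needed is keeping the orientation conventions straight, specifically verifying that the chosen boundary traversal direction of $\partial R$ appears on $\gamma$'s right at entry and on $\gamma$'s left at exit. This is a general fact about the induced boundary orientation of a region in an oriented surface, used here via orientability of $\widetilde{K}$. Once that convention is nailed down, the rule follows directly from region alternating, with no further combinatorial work required.
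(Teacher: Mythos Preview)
Your argument is correct and is exactly the approach the paper takes: the paper's proof is a one-line appeal to the region-alternating property (citing Menasco's alternating property), and you have simply unpacked that appeal with an explicit orientation check on $\widetilde{K}$. Nothing is missing and nothing is different in spirit; you have written out in detail what the paper asserts follows ``directly from our definition of region alternating.''
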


\begin{proof}
This follows directly from our definition of region alternating, analogous to the \emph{alternating property} of Menasco~\cite{Menasco:AltLinks}. 
\end{proof}

Now let $F$ be a closed, orientable, incompressible, meridianally incompressible surface in $M\setminus L$. Using~\cite[Theorem~3.8]{HowiePurcell}, we will assume $F$ has been isotoped into normal form with respect to the chunk decomposition.

\begin{lemma}\label{Lem:Curvealpha}
  No curve of intersection $F\cap \partial C$ bounds a disc on $\partial C$.
\end{lemma}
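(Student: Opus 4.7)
The plan is an innermost-disc argument tailored to the chunk decomposition of \refthm{ChunkDecomp}. Take a representative of $F$ in its normal isotopy class minimizing $|F\cap\partial C|$, and suppose for contradiction that a component $\alpha$ of $F\cap\partial C$ bounds a disc $D\subset\partial C$; choose $\alpha$ so that $D$ is innermost, i.e.\ $\operatorname{int}(D)\cap F=\emptyset$. Let $k$ denote the number of ideal vertices of $\Gamma=\widetilde\pi(\widetilde L)$ inside $D$, equivalently the number of truncation faces of $\partial C$ contained in $D$.

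If $D$ lies in a single face of $\Gamma$, then cellularity of $\pi(L)$ (forced by \refdef{KBalt}) implies that face is a disc and $\alpha$ bounds within it, directly contradicting the normal form condition that no component of $F\cap\partial C$ lying in a face bounds a disc in that face. Otherwise, push $D$ slightly into $C$ to a disc $\bar D$ with $\partial\bar D=\alpha$ and $\bar D\cap F=\alpha$. Incompressibility of the normal surface $F\cap C$ in the solid torus $C$ forces $\alpha$ to also bound a disc $D_F\subset F\cap C$, so that $\bar D\cup D_F$ is an embedded $2$-sphere in $C$. Since $C$ is irreducible, this sphere bounds a ball $B\subset C$, and capping each of the $k$ truncation faces in $D$ with a meridional disc of $N(L)$ extends $B$ to a ball $B'\subset M$ whose boundary sphere $\Sigma$ meets $L$ transversally in exactly $k$ points.

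If $k=0$, then $\Sigma$ misses $L$ entirely and isotoping $F$ across $B'$ strictly reduces $|F\cap\partial C|$ by one, contradicting minimality. If $k$ is odd, then $\Sigma$ meets the closed $1$-manifold $L$ in an odd number of points, but irreducibility of $M$ guarantees every embedded $2$-sphere bounds a ball and so has even $\bmod 2$ intersection with $L$, a contradiction; for $k=1$ one can equivalently appeal to meridianal incompressibility of $F$ to force $\alpha$ to bound on $F$ and derive the same parity contradiction.

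The main obstacle is the remaining case $k\geq 2$ even, where neither parity nor meridianal incompressibility alone suffices. Here I would invoke the Left--Right Rule (\reflem{LeftRight}) together with the cellular structure of $\Gamma$: following $\alpha$ through the sub-faces cut off inside $D$, the alternating arrangement of over- and under-crossings adjacent to $\alpha$ should yield a sub-arc of $\alpha$ that, together with a sub-path in $\Gamma$, cuts off a proper sub-disc of $D$ containing strictly fewer ideal vertices. Iterating this reduction lands us in one of the previously handled cases. Verifying that the alternating-plus-cellular combinatorics always produces such a simplifying sub-arc is the technical heart of the argument, and parallels the corresponding step in Menasco's classical analysis and in the orientable generalisation of~\cite{HowiePurcell}.
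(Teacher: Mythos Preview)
Your argument has a genuine gap in the case $k\geq 2$ even, which you yourself flag as ``the technical heart'' but do not carry out. The sketched reduction---finding a sub-arc of $\alpha$ that, together with a sub-path in $\Gamma$, bounds a proper sub-disc of $D$---does not close an induction: the resulting curve is not a component of $F\cap\partial C$, so neither the normal-form conditions nor the incompressibility or meridianal incompressibility of $F$ say anything about it, and you cannot simply iterate. (Your case $k=0$ is in fact vacuous: once $\alpha$ crosses an interior edge, \reflem{LeftRight} already forces a crossing vertex to lie inside $D$.) The capping construction of $B'$ and $\Sigma$ is also delicate here, since $\partial C$ is glued to itself in forming $M\setminus L$, so $D$ does not sit in $M$ as an embedded disc in any obvious way.

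The paper's proof bypasses all case analysis on $k$ with a single direct step. After choosing $\alpha$ innermost and observing it must cross an interior edge (normality plus cellularity), \reflem{LeftRight} produces a crossing vertex $v$ lying inside $D$. The crucial structural input is \refthm{ChunkDecomp}~(4): at each ideal vertex the two \emph{opposite} edges are identified under the chunk gluing. Hence if $\alpha$ meets an edge $e$ adjacent to $v$, the gluing forces $F\cap\partial C$ to meet the opposite edge $e'$ as well; since $v\in D$ and $\alpha$ is innermost, that second intersection must again belong to $\alpha$. Thus $\alpha$ runs on both sides of the crossing, and after gluing one reads off a meridianal compressing disc for $F$, contradicting meridianal incompressibility directly. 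This is the idea you were reaching for in your $k\geq 2$ sketch, but it applies immediately without any reduction or parity argument.
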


\begin{proof}
Suppose that there is an innermost curve $\alpha$ of $F\cap \bdy C$ that bounds a disc on $\partial C$. The curve $\alpha$ must intersect interior edges of the chunk by fact that normal form has no closed curves of intersection with faces.
Because the diagram is cellular, $\alpha$ must enter and exit each region. 
By Lemma~\ref{Lem:LeftRight}, $\alpha$ must intersect edges corresponding to two crossings, once with the crossing on one side of $\alpha$, once on the other side. By \Cref{Thm:ChunkDecomp}~(4), the edge that $\alpha$ intersects is identified with another edge on the opposite side of the vertex corresponding to the crossing. Thus there must exist a corresponding intersection arc on the other side of the crossing. Because $\alpha$ is innermost, one of these arcs must also be a part of $\alpha$. However, after gluing the crossing arcs, there exists a meridianal compression disc on $X$, contradicting the meridianal incompressibility of $F$. 
\end{proof}

\begin{lemma}\label{Lem:MeridDisc}
  Components of $F\cap C$ are meridian discs for the solid torus $C$.
\end{lemma}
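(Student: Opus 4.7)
The plan is to apply the classification of properly embedded, incompressible, orientable surfaces in a solid torus to each component $D$ of $F \cap C$, and then eliminate every possibility except a meridian disc. Since $F$ has been isotoped into normal form with respect to the chunk decomposition, every such $D$ is properly embedded and incompressible in $C$, and it is orientable because $F$ is. Because $C$ is homeomorphic to $M(p,q) \cut K$, which is precisely the solid torus $V$ defining the prism manifold, the standard classification gives three possibilities: $D$ is a meridian disc of $V$, a boundary-parallel disc, or a boundary-parallel annulus. A boundary-parallel disc has its boundary bounding a disc on $\partial C$, exactly the configuration excluded by \Cref{Lem:Curvealpha}, so that case is immediate.

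The crux, and the anticipated main obstacle, is ruling out boundary-parallel annuli. My plan is to select an innermost such component $D$: that is, $D$ cobounds a solid region $R \cong A \times I$ with an annulus $A' \subset \partial C$, chosen so the interior of $R$ meets no other component of $F \cap C$. Since $L$ is Klein-bottly alternating, every essential simple closed curve on $\partial C = \widetilde{K}$ meets $\widetilde{\pi}(\widetilde{L})$, so both boundary curves of $D$ cross edges of the diagram and $A'$ interacts nontrivially with the diagram graph. I would then split on whether $A'$ contains an ideal vertex. If it does, then $A'$ contains the corresponding truncation face $T_v$, and a vertical cross-section of $R$ from $D$ to $T_v$ thickens to a disc in $R$ whose boundary is an arc on $F$ together with an arc on the link cusp; this is a meridianal compression disc for $F$, contradicting meridianal incompressibility. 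If $A'$ contains no ideal vertex, then every edge meeting $A'$ has both endpoints outside $A'$ and enters and exits $A'$ transversely through $\partial A' = \partial D$; I would then combine the left-right rule of \Cref{Lem:LeftRight} with the normal form conditions on arcs of $F \cap \partial C$ and the cellularity forced by Klein-bottly alternating to produce either a combinatorial contradiction with the alternating over/under pattern, or a reducing isotopy of $F$ across $R$ that lowers $|F \cap \partial C|$ and violates the minimality implicit in normal form.

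With all non-meridian options eliminated, every component of $F \cap C$ must be a meridian disc of $V$, which is the conclusion of the lemma. I expect the difficulty to concentrate almost entirely in the annulus case: the sub-case with a crossing in $A'$ should succumb cleanly to meridianal incompressibility, but the crossing-free sub-case is likely to demand a genuinely combinatorial argument via \Cref{Lem:LeftRight} rather than a purely topological manipulation.
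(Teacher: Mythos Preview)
Your overall architecture matches the paper's: classify incompressible pieces of $F\cap C$ in the solid torus, kill inessential discs with \Cref{Lem:Curvealpha}, and then eliminate boundary-parallel annuli using an outermost/innermost choice together with the left--right rule, meridianal incompressibility, and a complexity-reducing isotopy. So the strategy is right.

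The gap is in your annulus argument, specifically Case~A. The disc you propose, a ``vertical cross-section of $R$ from $D$ to $T_v$,'' does not produce a meridianal compression of the closed surface $F$: its boundary is an arc on $F$ together with an arc on $\partial N(L)$, which is the shape of a meridianal \emph{boundary} compression, but $F$ has no boundary. If instead you close this up to a genuine closed curve on $F$ by pushing $\partial T_v$ across $R$ onto $E$, the resulting curve is inessential in the annulus $E$ and hence in $F$, so it is not a compression at all. In short, the mere presence of a truncation face inside $A'$ does not by itself yield a meridianal compression. Your Case~B is then left as a sketch (``a genuinely combinatorial argument via \Cref{Lem:LeftRight}'') without an actual mechanism.

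The paper avoids this case split entirely. It uses \Cref{Thm:ChunkDecomp}(4): when $A_1$ crosses an interior edge $e_1$, the gluing identifies $e_1$ with the opposite edge $e_2$ across the ideal vertex, so some curve of $F\cap\partial C$ must meet $e_2$. If that curve were $A_2$, the two arcs on either side of the vertex would glue to give a meridianal compression (exactly as in the proof of \Cref{Lem:Curvealpha}); meridianal incompressibility rules this out. Combined with the fact (from \Cref{Lem:LeftRight} and outermost) that $A_1$ and $A_2$ hit the same set of edges, one concludes both meet $e_1$, and the sub-arc of $e_1$ between them gives a boundary-compression disc for $E$, reducing intersections with edges and contradicting normality. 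You have all of these ingredients in hand; the missing step is to invoke the edge-identification across the vertex rather than trying to build a disc directly to the truncation face.
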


\begin{proof}
By \Cref{Lem:Curvealpha}, no component of $F\cap C$ bounds a disc on $\bdy C$. By definition of normal, each component of $F\cap C$ is incompressible, so $F\cap C$ consists of annuli and meridianal discs; see for example~\cite{rubinstein1978one} or~\cite[Theorem~8.2]{PurcellTsvietkova:Standard}.

Suppose that there is an annulus component $E$ of $F\cap C$. It must be parallel to $\bdy C$. Assume $E$ is outermost. The boundary $\bdy E$ consists of two parallel curves $A_1$ and $A_2$ on $\bdy C$, bounding an annuls $A\subset \bdy C$. Note they must intersect edges of $\Gamma=\Ddiagram$ by definition of normal. By \Cref{Lem:LeftRight} and outermost, $A_1$ and $A_2$ must intersect the same class of crossing arcs (interior edges). 

If $A_1$ intersects interior edge $e_1$, as in the proof of \Cref{Lem:Curvealpha} some curve of $F \cap \bdy C$ meets an edge $e_2$ adjacent across an ideal vertex to $e_1$. Because $F$ is meridianally incompressible, this cannot be $A_2$. Therefore, we can assume that $A_1$ and $A_2$ both intersect $e_1$. But in this case, there is a boundary compressing disc $D$ for $E$ with an arc of its boundary on $e_1$. We may isotope $F$ through $D$ to remove both intersections of $A_1$, $A_2$ with $e_1$.
Thus we can assume that $F\cap C$ consists only of meridianal discs of $C$.
\end{proof}

\begin{lemma}\label{Lem:EulerChar}
Suppose $F$ is a closed orientable surface of genus $g$ such that $F\cap C$ consists only of meridian discs of the solid torus $C$. Let $v$ denote the number of intersections $F\cap \Ddiagram$, and let $f$ denote the number of meridian discs $F\cap C$. Then $f=2-2g+v$.
\end{lemma}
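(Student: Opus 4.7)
The plan is to compute $\chi(F) = 2-2g$ via an Euler characteristic count, using a CW structure on $F$ induced from the chunk decomposition in \refthm{ChunkDecomp}.

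First, I would isotope $F$ into normal form with respect to $C$ using \cite[Theorem~3.8]{HowiePurcell}, so that each of the $f$ meridian discs is a $2$-cell of $F$; each arc of $\bdy D_i$ lying in a single face of $\widetilde{S}\setminus\widetilde{\pi}(\widetilde{L})$ and cut off by intersections with $\widetilde{\pi}(\widetilde{L})$ is a $1$-cell; and each such intersection point is a $0$-cell. Normality guarantees that these intersections avoid the ideal vertices, so this subdivision is clean.

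Next, I would track the identifications induced by the chunk gluing. By \refthm{ChunkDecomp}(3), faces pair up via $\tau$ composed with a rotation, so arcs in a face $R$ are in bijection with arcs in $\tau(R)$; this is a two-to-one identification of arcs. By \refthm{ChunkDecomp}(4), ideal edges glue in fours, combining the face gluing's $\tau$-identification with the induced identification of opposite edges at each ideal vertex; this yields a four-to-one identification of $0$-cells on ideal edges. Writing $v$ for the number of intersection points after all chunk identifications, the CW complex on $F$ thus has $V=v$ vertices, $E=2v$ edges, and $F_{\mathrm{faces}}=f$ faces. Euler's formula then gives
\[
2-2g \;=\; \chi(F) \;=\; V - E + F_{\mathrm{faces}} \;=\; v - 2v + f \;=\; f - v,
\]
and rearranging yields $f = 2-2g+v$.

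The hard part will be verifying the identification combinatorics in detail, in particular confirming that the gear-rotation of \refthm{ChunkDecomp}(3) together with the opposite-edge identification at each ideal vertex truly produces the four-to-one orbit structure on ideal edges (and hence the matching two-to-one identification of arcs and four-to-one identification of $0$-cells). A secondary check is that, in normal form, the disc boundaries do not introduce spurious arcs near ideal vertices that would alter the count; the conditions in the definition of normal should preclude such pathology.
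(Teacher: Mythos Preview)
Your proposal is correct and follows essentially the same approach as the paper. Both arguments put a CW structure on $F$ with $2$-cells the meridian discs, $1$-cells the arcs in faces, and $0$-cells the intersection points with edges of $\Gamma$, and then compute $\chi(F)=v-2v+f$; the paper obtains $e=2v$ by observing that each $0$-cell of $F$ is $4$-valent (phrased as ``$\Ddiagram$ is $4$-valent''), while you obtain the same identity by tracking the $2$-to-$1$ face identifications on arcs and $4$-to-$1$ edge identifications on points, which is just the other side of the same count.
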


\begin{proof}
The intersection of $F$ with $C$ gives $F$ a cell structure. The 0-cells are intersections with $\Ddiagram$, which are edges of $\Gamma$. The 1-cells are intersections with regions of $\bdy C\setminus \Gamma$. The 2-cells are the meridian discs of $F\cap C$. Letting $e$ denote the number of 1-cells, we have $2e=4v$ since $\Ddiagram$ is 4-valent. 
We calculate the Euler characteristic $\chi(F)=2-2g=v-e+f$. Reorganizing, $f=2-2g+v$.
\end{proof}

\begin{proof}[Proof of Meridian Lemma, \Cref{Prop:MerLem}]
By \Cref{Lem:MeridDisc}, components of $F\cap C$ are meridian discs for the solid torus $C$. By \Cref{Lem:EulerChar}, the number of meridian discs is $2-2g+v$. Because $\Ddiagram$ is checkerboard coloured, $v$ must be even. 
\end{proof}

We now consider hyperbolicity of Klein-bottly alternating knots. We say a link is \emph{(geometrically) prime} in a 3-manifold $M$ if its complement in $M$ contains no essential meridianal annulus.
Adams proved that geometrically prime, toroidally alternating knots in many lens spaces are hyperbolic~\cite{Adams:Toroidally}. We now show that geometrically prime Klein-bottly alternating knots are hyperbolic. 

\begin{theorem}\label{Thm:Kleinbottly}
Let $L$ be a prime, Klein-bottly alternating knot in $M(p,q)$. Then $M\setminus L$ is hyperbolic. 
\end{theorem}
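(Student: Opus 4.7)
The plan is to invoke Thurston's hyperbolisation theorem for Haken manifolds with toroidal boundary, verifying that $M(p,q)\setminus L$ is irreducible, has incompressible boundary, is atoroidal, is anannular, and is not Seifert fibered. Compactness, orientability, and toroidal boundary are immediate. Irreducibility follows from $M(p,q)$ being irreducible combined with the fact that the cellular alternating diagram on $K$ (with at least one crossing, since $L$ is a knot in $K\widetilde\times I$) forces $L$ not to lie in any 3-ball in $M$. Incompressibility of $\partial N(L)$ then follows since $L$ is not the unknot in a ball.

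For atoroidality, I would assume for contradiction that $F\subset M\setminus L$ is an essential torus. Since $L$ is a knot, any meridianal compression of $F$ produces either an inessential sphere or a meridianal annulus, so iterating reduces to the case where $F$ is meridianally incompressible. Apply the Meridian Lemma (\Cref{Prop:MerLem}) together with \Cref{Lem:MeridDisc} and \Cref{Lem:EulerChar} with $g=1$: after isotopy into normal form, $F\cap C$ consists of exactly $v=|F\cap \Ddiagram|$ meridian discs of the solid torus $C$, with $v$ even, and their boundaries are $v$ parallel meridians on $\widetilde K$. The complementary pieces of $F$ in $K\widetilde\times I\setminus L$ then have total Euler characteristic $-v$ and meet $\widetilde K$ in these $v$ circles. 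A systematic analysis of these pieces using the chunk decomposition (\Cref{Thm:ChunkDecomp}) and the Left--Right Rule (\Cref{Lem:LeftRight}), following Adams' argument for toroidally alternating knots~\cite{Adams:Toroidally}, should force $F$ to be boundary-parallel or compressible, contradicting essentiality.

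For anannularity, any essential annulus in $M\setminus L$ has both boundaries on $\partial N(L)$. If the boundary slopes are meridians of $L$, the annulus is an essential meridianal annulus, contradicting the primeness hypothesis. Otherwise the annulus is meridianally incompressible and the same Meridian-Lemma-based analysis applies with $\chi(A)=0$, producing an analogous count of meridian discs in $C$ and annular or higher-genus pieces in $K\widetilde\times I\setminus L$ that must be ruled out by the same combinatorics. Finally, $M\setminus L$ is not Seifert fibered: a knot complement in the prism manifold $M(p,q)$ admits a Seifert fibration only if $L$ is isotopic to a regular fiber of a Seifert fibration of $M(p,q)$, and a Klein-bottly alternating knot with a cellular diagram and at least one crossing is not such a fiber.

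The main obstacle will be the atoroidality step: converting the Euler-characteristic count from the Meridian Lemma into a combinatorial contradiction. This requires carefully tracking how each piece of $F$ in $K\widetilde\times I\setminus L$ meets regions and crossings of $\Ddiagram$, applying \Cref{Lem:LeftRight} systematically to exclude every possible configuration of meridian discs together with their connecting pieces. Adapting Adams' toroidally-alternating argument to the nonorientable setting is delicate because the gear-rotation gluing rule of \Cref{Thm:ChunkDecomp}(3) identifies each face with its image under $\tau$, so one must keep track of local orientations when following a normal curve around the chunk.
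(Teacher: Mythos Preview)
Your overall framework (Thurston hyperbolisation, then rule out essential spheres, discs, tori, annuli) matches the paper, but several steps have genuine gaps.

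For atoroidality you reach the key equation $f=v$ from \Cref{Lem:EulerChar} with $g=1$ but then miss that the argument ends immediately: each meridian boundary is a nontrivial curve on $\widetilde K$, so by \Cref{Def:KBalt} it meets $\Ddiagram$, and by checkerboard colourability it meets it an even number of times, hence at least twice; summing gives $v\ge 2f=2v$, which is a contradiction. No ``systematic analysis following Adams'' is required, and your discussion of ``complementary pieces of $F$ in $K\widetilde\times I\setminus L$'' misreads the chunk decomposition. There is a \emph{single} chunk $C\cong M\cut K$, a solid torus with boundary $\widetilde K$; a normal closed surface is built entirely from the meridian discs $F\cap C$, reassembled via the face gluings of \Cref{Thm:ChunkDecomp}(3). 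There is no separate $K\widetilde\times I$ piece to analyse.

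The disc and annulus cases are where your sketch is actually incomplete. Saying ``$L$ is not the unknot in a ball'' does not rule out an essential disc: one must analyse the slope of $\partial D$ on $\partial N(L)$, and in the case of longitudinal winding $\ge 2$ conclude that $M$ is a specific lens space $L(4q,2q-1)$, then invoke Rubinstein's results on Klein bottles in lens spaces~\cite{rubinstein1978one} to produce a nontrivial curve on $K$ missing $\pi(L)$, contradicting \Cref{Def:KBalt}. For the annulus, the Meridian Lemma as stated applies only to \emph{closed} surfaces, so your proposed ``same Meridian-Lemma-based analysis with $\chi(A)=0$'' does not go through. The paper instead shows that the presence of a non-meridianal essential annulus forces $M\setminus N(L)$ to be Seifert fibered, makes the annulus vertical, and then uses Rubinstein's presentation of $\pi_1(M(p,q))$~\cite{rubinstein19793} to pin down the fibre slope on $\partial N(K)$ and conclude that $L$ would embed on the torus $\partial N(K)$, contradicting the Klein-bottly alternating hypothesis. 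Your one-line claim that a Klein-bottly alternating knot ``is not such a fibre'' is exactly the statement that needs this structural input to be justified.
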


Note that while \Cref{Prop:MerLem} applied to links, the proof of \Cref{Thm:Kleinbottly} does require that we restrict to knots. 

\begin{proof}[Proof of \Cref{Thm:Kleinbottly}]
By Thurston's hyperbolization theorem, we need only rule out essential spheres, discs, tori, and annuli~\cite{thurston1982three}.

Suppose there is an essential torus $T$ in $M\setminus L$.  \Cref{Lem:EulerChar} implies that $f=v$, meaning each meridianal disc $F\cap C$ contributes one intersection of $T$ and $\Gamma = \Ddiagram$. Then either $\Ddiagram$ is not cellular or not checkerboard colorable, a contradiction. 

Suppose we have an essential sphere $S$ in $M\setminus L$. Then \Cref{Lem:EulerChar} implies that $f=v+2$, which means there will be at least two meridianal discs that have no intersection with $\Gamma$, a contradiction.

Suppose there is an essential compression disc $D$ for $\partial N(L)$ in $\partial(M\setminus N(L))$. The disc $D$ has boundary some $(u,v)$-curve on $N(L)$, where $u$ is the number of times it wraps around a meridian of $N(L)$, and $v$ the number of times it wraps around the longitude, taken to have minimal intersection with $K$ (blackboard longitude). If $v=0$, surger $N(L)$ along $D$ to obtain a 2-sphere embedded in $M(p,q)$ that is non-separating. This contradicts the irreducibility of $M$. Similarly if $v=1$, then $L$ is unknotted, bounding a disc with boundary disjoint from $K$ (using $L$ is a knot, hence 2-sided). Either the disc can be isotoped into $K$, or a curve of $D\cap K$ forms a $(p,q)$ curve on $\widetilde{K}$ that is disjoint from $L$. Either case contradicts Definition~\ref{Def:KBalt} for Klein-bottly alternating.

Suppose $v\geq 2$. Then $N(L)\cup N(D)$ is a lens space minus a ball $B$, where $N(D)$ denotes regular neighborhood of the disc in $M\setminus L$. Since $L$ is completely disjoint from $B$, we can recover $M(p,q)$ by gluing the ball back to $N(L)\cup N(D)$. The result after gluing is a lens space, and the only possibility for $M$ is that $M=L(4q,2q-1)$, where $|q|\geq 1$. The lens space has classical Heegaard splitting into two solid tori $V_1$ and $V_2$. The knot $L$ must be the core curve $V_1$ because the Heegaard splitting of a lens space is unique. By transversality, we may isotope the core of $V_2$ to meet the klein bottle $K$ in a finite number of points. Thus we may isotope the Heegaard torus $\bdy V_2$ such that $V_2$ intersects $K$ in a finite number of meridians for $V_2$. In fact, Rubinstein shows in~\cite[Lemma~9]{rubinstein1978one} that $\bdy V_2$ may be isotoped so that $V_2\cap K$ is a single meridian disc. Since $L$ intersects the meridianal curve of $V_1$ in a single point, far from $\bdy V_2$, by a further isotopy disjoint from $L$, we may transfer $K\cap V_2$ into a Mobius band $X$ following Rubinstein~\cite[Lemma~10]{rubinstein1978one}. Then the core curve of $X$ is a nontrivial curve on $K$ that does not intersect the diagram $\pi(L)$, contradicting Definition~\ref{Def:KBalt}. Thus there is no essential disc. 

Now suppose we have an essential annulus $A$ in $M\setminus L$. The annulus is not meridianal by our assumption that $L$ is geometrically prime. Note that $M\setminus N(L)$ is compact, orientable, irreducible and atoroidal. Then $M\setminus N(L)$ is Seifert fibered; see for example Hatcher~\cite[Lemma~1.18]{hatcher2007notes}. The annulus $A$ can be isotoped to be vertical if $M\setminus N(L)$ is not an $I$-bundle over the torus or Klein bottle; see Hatcher~\cite[Lemma~1.16]{hatcher2007notes}. In this case, the annulus $A$ appears as a union of fibers in the Seifert fibration, with $\partial A$ two of the fibers that lie on $\partial N(L)$. Say one component of $\partial A$ is a $(u,v)$-curve on $\bdy N(L)$ that is not meridianal. Then there exists a Seifert fibered solid torus $V$ where $\bdy V$ agrees with the Seifert fibration of $M\setminus N(L)$. Therefore, we can insert a solid torus to obtain a Seifert fibering of $M=M(p,q)$. However, by Rubinstein~\cite[Proposition~3]{rubinstein19793}, the fundamental group of $M$ can be presented by
$\pi_1(M)=\langle a,b|abab^{-1}, a^p b^{2q} \rangle$, and the fibers of $M$ are homotopy classes of $b^2$ on $\partial N(K)$. This Seifert fibering forces the diagram to have no crossing at all because we can embed the entire knot on the torus $T=\bdy N(K)$, contradicting the fact that $L$ meets any nontrivial curve on the Klein bottle. 

The manifold $M\setminus N(L)$ cannot be an $I$-bundle over the torus because it has only one boundary component. In the case where $M\setminus N(L)$ is an $I$-bundle over the Klein bottle, there is only one Klein bottle in $M$ up to isotopy, so $L$ does not touch the Klein bottle $K$, which contradicts Definition~\ref{Def:KBalt}. Thus there is no essential annulus in $M\setminus L$.
\end{proof}

\bibliographystyle{amsplain}
\bibliography{refs}
\end{document}